\numberwithin{table}{section}
\numberwithin{equation}{section}
\def \ds {\displaystyle}
\theoremstyle{plain}
\newtheorem{theorem}{Theorem}[section]
\newtheorem{proposition}[theorem]{Proposition}
\newtheorem{definition}[theorem]{Definition}
\newtheorem{lemma}[theorem]{Lemma}
\newtheorem{example}[theorem]{Example}
\newtheorem{remark}[theorem]{Remark}
\newtheorem{illustration}{Illustration}
\author[1,2]{ \textbf{Daryl M. Magpantay}}
\author[3]{ \textbf{Bryan S. Hernandez}}
\author[3]{\textbf{Aurelio A. de los Reyes V}} 
\author[1,4,5,6]{\textbf{Eduardo R. Mendoza}}
\author[1,*]{ \textbf{Ederlina G. Nocon}}
\affil[1]{\small \textit{Mathematics and Statistics Department, De La Salle University, Manila  0922, Philippines}}
\affil[2]{\small \textit{College of Arts and Sciences, Batangas State University, Batangas City 4200, Philippines}}
\affil[3]{\small \textit{Institute of Mathematics, University of the Philippines Diliman, Quezon City 1101, Philippines}}
\affil[4]{\small \textit{Center for Natural Sciences and Environmental Research, De la Salle University, Manila 0922, Philippines}}
\affil[5]{\small \textit{Max Planck Institute of Biochemistry, Martinsried, Munich, Germany}}
\affil[6]{\small \textit{LMU Faculty of Physics, Geschwister -Scholl- Platz 1, 80539 Munich, Germany}}
\affil[*]{Corresponding author: \texttt{ederlina.nocon@dlsu.edu.ph}}
\title{\vspace{3.5cm}\textbf{A Computational Approach to Multistationarity in Poly-PL Kinetic Systems}}
\date{\normalsize (Received July 14, 2020)}
\begin{document}
\maketitle
\thispagestyle{empty}
\begin{abstract}
One important question that interests those who work in chemical reaction network theory (CRNT) is this:  \textit{Does the system obtained from a reaction network admit a positive equilibrium and if it does, can there be more than one within a stoichiometric class?}  The higher deficiency algorithm (HDA) of Ji and Feinberg provided a method of determining the multistationarity capacity of a CRN with mass action kinetics (MAK). An extension of this, called Multistationarity Algorithm (MSA), recently came into the scene tackling CRNs with power law kinetics (PLK), a kinetic system which is more general (having MAK systems as a special case). For this paper, we provide a computational approach to study the multistationarity feature of reaction networks endowed with kinetics which are non-negative linear combinations of power law functions called poly-PL kinetics (PYK).  The idea is to use MSA and combine it with a transformation called STAR-MSC 
(i.e., $S$-invariant Termwise Addition of Reactions via Maximal Stoichiometric Coefficients)
producing PLKs that are dynamically equivalent to PYKs. This leads us to being able to determinine the multistationarity capacity of a much larger class of kinetic systems.  We show that if the transformed dynamically equivalent PLK system is multistationary for a stoichiometric class for a set of particular rate constants, then so is its original corresponding PYK system.  Moreover, the monostationarity property of the transformed PLK system also implies the monostationarity property of the original PYK system.
\end{abstract}
\baselineskip=0.30in

\section{Introduction}
\label{intro}
The study of generalizations of mass action kinetics in chemical reaction network theory (CRNT) was initiated in 1972 by F. Horn and R. Jackson in their foundational paper entitled ``General Mass Action Kinetics'' \cite{HornJackson}. Their introduction of real coefficients in chemical reaction networks can be viewed as a geometric formalism for power law rate functions. This approach was further developed by S. M\"uller and G. Regensburger with their concept of ``generalized mass action system" in two contributions in 2012 and 2014 \cite{muller,MURE2014}. In parallel, C. Wiuf and E. Feliu conducted an extensive analysis of injectivity properties of power law and Hill-type kinetic systems in 2013 \cite{WIFE2013}. Furthermore, in 2015, Gabor et al. studied linear conjugacy of kinetic systems, which they called ``Bio-CRNs", with rational rate functions \cite{GABO2015}.

In power law kinetic systems, interactions between species are specified in a kinetic order matrix, which assigns a real number per species to each reaction. (In mass action systems, the kinetic orders are the stoichiometric coefficients of the reactants). This formalism allows additional aspects such as regulatory relationships or inhomogeneity of the reaction environment, which are particularly important in biochemical networks, to be considered. The power law approach to approximate biochemical system dynamics, pioneered by M. Savageau in 1969 \cite{SAVA1969a,SAVA1969b} has evolved into Biochemical Systems Theory (BST) with a multitude of applications to complex biochemical systems (cf. E. Voit's review  \cite{VOIT2013}). A chemical reaction network approach specifically for BST models was developed by Arceo et al. in 2015 and 2017 \cite{AJMM2015,AJKM2017}. Voit et al. \cite{VOMO2015} provides a compact overview of the evolution of the field from the work of C. Guldberg and P. Waage  in 1865 through Savageau's 1969 power law approach to the connection with CRNT by M\"uller and Regensburger in 2012.

Poly-PL kinetic systems (denoted as PYK systems) are chemical reaction networks (CRNs) endowed with non-negative linear combinations of power law functions.
These systems were introduced by Talabis et al. \cite{tmmrn2020} and were shown to have complex balanced equilibria for weakly reversible such systems with zero kinetic reactant deficiency, which are called PY-TIK systems. The PYK subset of polynomial kinetic systems (POK systems) occur in realizations of evolutionary games as chemical kinetic systems proposed by Veloz et al. \cite{veloz2014}, particularly for multi-player games with replicator dynamics.

This paper addresses the problem of multistationarity in a poly-PL system: are there rate constants such that two distinct positive equilibria in a stoichiometric class exist?

In 2011, the Higher Deficiency Algorithm (HDA) for mass action kinetic (MAK) systems, which examines the systems' capacity for multistationarity (i.e., multiple equilibria), was introduced in the PhD thesis of H. Ji under the mentorship of M. Feinberg \cite{ji}. The algorithm was based on the concepts of fundamental classes of underlying reactions together with ideas such as the upper-middle-lower partitions familiar from the Deficiency One Algorithm. It was extended to the set of power law kinetic systems with reactant-determined kinetics (PL-RDK) by Hernandez et al. \cite{hmr2019}, and shown to be applicable to PL-NDK systems (i.e., there are nodes with branching reactions with different interactions for the kinetic order rows), if the fundamental decomposition generated by the fundamental classes is independent (the network's stoichiometric subspace is the direct sum of the subnetworks' stoichiometric subspaces) \cite{hmr22019}. Otherwise, a method called the CF-RM transformation introduced in \cite{neml2019} must first be applied to get a dynamically equivalent system (also called ``realization'' of the original system) with reactant-determined kinetics (PL-RDK), and then applies the extended HDA to determine its capacity for multistationarity.
This procedure was called the Multistationarity Algorithm (MSA) for power law kinetic systems (MSA-PLK).

STAR methods construct realizations of the original system with the same stoichiometric subspace (ST = S-invariant Transformation) by adding reactions (=AR). The STAR-MSC variant is based on the idea of using the maximal stoichiometric coefficient (MSC) among the complexes in CRNs to construct reactions whose reactant complexes and product complexes are different from existing ones. This is done by uniform translation of the reactants and products to create a ``replica'' of the CRN. The method creates $h - 1$  replicas of the original network $\mathscr{N}$. Hence, the transformed network $\mathscr{N}^*$ of $\mathscr{N}$ becomes the union, in the sense of \cite{ghms2019}, of the replicas and the original CRN.

This paper highlights the combination of the ``STAR-MSC transformation'' of a poly-PL kinetic system to a dynamically equivalent power law kinetic system introduced in \cite{dmag2019} and the MSA-PLK of Hernandez et al. \cite{hmr2019} to determine the capacity of the poly-PL kinetic system for multistationarity, i.e., the existence of different positive equilibria in a stoichiometric class for certain set of rate constants. 
We illustrate the steps of the method with a running example, which has the capacity for multistationarity within a stoichiometric class.

Fortun et al. \cite{FTJM2020} recently used the canonical PL-representation of a PYK system from \cite{tmmrn2020} together with STAR-MSC to extend results on generalized mass action kinetic (GMAK) systems, including those of M\"uller and Regensburger \cite{muller,MURE2014} on complex equilibria multiplicity and of Boros et al. \cite{BOMR2019} on linear stability to subsets of PY-RDK systems. They also extend results of Fortun and Mendoza \cite{FOME2020} on concentration robustness to these subsets. Furthermore, Hernandez and Mendoza \cite{HEME2020} studied Hill-type systems, which are widely used in enzyme kinetics, by associating a unique PYK system to any Hill-type system and obtained results on multiplicity and concentration robustness for such systems. The results of this paper enable computational approaches to aspects of these applications of PYK systems.

The paper is organized as follows: Section \ref{prelim} introduces the fundamental concepts and results on chemical reaction networks and kinetic systems needed for later sections. Section \ref{PYK:STAR:MSC} reviews the basic properties of poly-PL systems and illustrates them with a running example. It then introduces the STAR-MSC transformation in detail, supporting the discussion with computations for the running example. In Section \ref{application:MSA:STAR}, after a review of MSA-PLK, the simple criterion for determining the subset of equilibria of the poly-PL system is presented. The computations for the running example are presented to complete the illustration of ``MSA-PYK.'' Section \ref{sec:conclusion} summarizes the results of the paper and provides an outlook for further work.

\section{Fundamentals of Chemical Reaction Networks and Kinetic Systems}
\label{prelim}
In this section, we discuss fundamental concepts and results about chemical reaction networks (CRN) and chemical kinetic systems (CKS). Moreover, we also explore CRN as a digraph with vertex labeling. Also, we focus on CKS side of power-law kinetics (PLK) system.

\begin{definition}
A \textbf{chemical reaction network (CRN)} is a digraph $(\mathscr{C},\mathscr{R})$ where each vertex has positive degree and stoichiometry, i.e., there is a finite set $\mathscr{S}$ (whose elements are called \textbf{species}) such that $\mathscr{C}$ is a subset of $\mathbb{R}^{\mathscr{S}}_{\geq 0}$. Each vertex is called a \textbf{complex} and its coordinates in $\mathbb{R}^{\mathscr{S}}_{\geq 0}$ are called \textbf{stoichiometric coefficients}. The arcs are called \textbf{reactions}.
\end{definition}

We denote the number of species with $m$, the number of complexes with $n$, and the number of reactions with $r$. Also, we denote this nonempty finite collection of reactions as $\mathscr{R} \subset (\mathscr{C} \times \mathscr{C})$. We implicitly assume the sets are numbered and let
\begin{displaymath}
\mathscr{S}=\{X_1, X_2, \ldots, X_m\}, \hspace{0.5cm} \mathscr{C}=\{C_1, C_2, \ldots, C_n\} \;\;\; \text{and}\;\;\; \mathscr{R}=\{R_1, R_2, \ldots, R_r\},
\end{displaymath}
where $m,n$ and $r$ are their respective cardinalities. Thus, $\mathbb{R}^{\mathscr{S}}_{\geq 0} \cong \mathbb{R}^m_{\geq 0}$. Consider the reaction
\begin{displaymath}
\alpha X_1 + \beta X_2 \rightarrow \gamma X_3,
\end{displaymath}
$X_1,X_2$ and $X_3$  are the species. The complexes are $\alpha X_1+\beta X_2$  and $\gamma X_3$. In particular, $\alpha X_1+\beta X_2$ is called the \textbf{reactant} (or \textbf{source}) \textbf{complex} and $\gamma X_3$ the \textbf{product complex}. The number of reactant complexes is denoted by $n_r$.

The stoichiometric coefficients are the non-negative integer coefficients $\alpha, \beta$ and $\gamma$. Under mass action kinetics (MAK), the rate at which the reaction occurs is given by the monomial
\begin{displaymath}
K=kX^{\alpha}_1 X^{\beta}_2
\end{displaymath}
with rate constant $k>0$. We can generalize this by considering power-law kinetics. The reaction rate can be
\begin{displaymath}
K=kX^a_1 X^b_2
\end{displaymath}
where $a$ and $b$ can be any real number. We call $a$ and $b$ as \textbf{kinetic orders}. Within a network involving additional species and reactions, the above reaction contributes to the dynamics of the species concentration as
\begin{displaymath}
	\dot{X}=
	 \left[
	\begin{array}{c}
	\dot{X}_1\\
	\dot{X}_2\\
	\dot{X}_3\\
	\vdots
	\end{array}
	\right]= kX^a_1 X^b_2 \left(
	\begin{array}{c}
	-\alpha\\
	-\beta\\
	\gamma\\
	\vdots
	\end{array}
	\right)+ \cdots .
	\end{displaymath}

\begin{definition}
Let $\mathscr{N}=(\mathscr{S,C,R})$ be a CRN. The {\textbf{incidence map}} $I_a : \mathbb{R}^\mathscr{R} \rightarrow \mathbb{R}^\mathscr{C}$ is the linear map such that for each reaction $r:C_i \rightarrow C_j \in \mathscr{R}$, the basis vector $\omega_r$ to the vector $\omega_{C_j}-\omega_{C_i} \in \mathscr{C}$.
\end{definition} 

\begin{definition}
The {\textbf{stoichiometric subspace}} of a reaction network $\left(\mathscr{S},\mathscr{C},\mathscr{R}\right)$, denoted by $S$, is the linear subspace of $\mathbb{R}^\mathscr{S}$ given by $S = span\left\{ {{C_j} - {C_i} \in \mathbb{R}^\mathscr{S}|\left( {{C_i},{C_j}} \right) \in \mathscr{R}} \right\}.$ The {\textbf{rank}} of the network, denoted by $s$, is given by $s=\dim S$. The set $\left( {x + S} \right) \cap \mathbb{R}_{ \ge 0}^\mathscr{S}$ is said to be a {\textbf{stoichiometric compatibility class}} of $x \in \mathbb{R}_{ \ge 0}^\mathscr{S}$.
\end{definition}

\begin{definition}
Two vectors $x, x^{*} \in {\mathbb{R}^\mathscr{S}}$ are {\textbf{stoichiometrically compatible}} if $x-x^{*}$ is an element of the stoichiometric subspace $S$.
\end{definition}

\begin{definition}
The \textbf{linkage classes} of a CRN are the subnetworks of a reaction graph where for any complexes $C_i,C_j$  of the subnetwork, there is path between them. The number of linkage classes is denoted by $\ell$.
\end{definition}
The linkage class is said to be a \textbf{strong linkage class} if there is a directed path from $C_i$ to $C_j$ and vice versa for any complexes $C_i, C_j$ of the subnetwork. The number of strong linkage classes is denoted by $s\ell$. Moreover, \textbf{terminal strong linkage classes}, the number of which is denoted as $t$, is the maximal strongly connected subnetworks where there are no edges (reactions) from a complex in the subgraph to a complex outside the subnetwork. The terminal strong linkage classes can be of two kinds: cycles (not necessarily simple) and singletons (which we call ``terminal points'').

\begin{example}
Given a chemical reaction network (CRN):

\hspace{1in} $R_1: 2A_1 \rightarrow A_3$

\hspace{1in} $R_2: A_2+A_3 \rightarrow A_3$

\hspace{1in} $R_3: A_3 \rightarrow A_2+A_3$

\hspace{1in} $R_4: 3A_4 \rightarrow A_2+A_3$

\hspace{1in} $R_5: 2A_1 \rightarrow 3A_4$

As observed, $m=4$ (species), $n=4$ (complexes), $n_r=4$ (reactant complexes) and $r=5$ (reactions). Also, we have

\hspace{1in} $\mathscr{S}=\{A_1,A_2,A_3,A_4\}$

\hspace{1in} $\mathscr{C}=\{C_1=2A_1, C_2=A_2+A_3, C_3=A_3, C_4=3A_4\}$

The number of linkage classes is one ($\ell=1$): $\{2A_1, A_3, A_2+A_3, 3A_4\}$, 
strong linkage classes is three ($s\ell=3$): $\{A_3, A_2+A_3\}$, $\{2A_1\}$ , $\{3A_4\}$ and
terminal strong linkage class is one ($t\ell=1$) : $\{A_3, A_2+A_3\}$.
\end{example}

\begin{definition}
A CRN is called
\begin{enumerate}[1.]
\item \textbf{weakly reversible} if $s\ell = \ell$;
\item \textbf{t-minimal} if $t=\ell$;
\item \textbf{point-terminal} if $t=n-n_r$; and
\item \textbf{cycle terminal} if $n-n_r=0$.
\end{enumerate}
\end{definition}

As observed in our running example, $t=1$ and $n-n_r=4-4=0$. This implies that the network is not point terminal. Also, $s\ell=3 \neq 1=\ell$. Hence, the network is not weakly reversible. But, $t=1=\ell$. Thus, the network is $t$-minimal.

\begin{definition}
The \textbf{deficiency of a CRN} is the integer $\delta = n-\ell-s$.
\end{definition}

In the running example, observe that $n=4$, $\ell=1$ and $s=3$. With this, the deficiency of the network $\delta=n-\ell-s=4-1-3=0$.

We now introduce useful partition of networks with respect to terminality:

\begin{definition}
A CRN is of type \textbf{terminality bounded by deficiency
(TBD)} if $t-\ell \leq \delta$, otherwise, it is of type \textbf{terminality not deficiency-bounded
(TND)}, i.e. $t-\ell > \delta$. 
\end{definition}

\begin{definition}
A CRN has \textbf{low reactant diversity (LRD)} if $n_r < s$, otherwise it has \textbf{sufficient reactant diversity (SRD)}. An SRD network has \textbf{high reactant diversity (HRD)} or \textbf{medium reactant diversity (MRD)} if $n_r > s$ or $n_r= s$, respectively.
\end{definition}

Going back to the running example, we have $t-\ell=1-1=0=\delta$ and $n_r=4 > 3=s$. Hence, the CRN is of type terminality bounded by deficiency (TBD) and has sufficient reactant diversity (SRD).

\begin{definition}
The \textbf{reactant subspace} $R$ is the linear space in $\mathbb{R}^{\mathscr{S}}$ generated by the reactant complexes, i.e., $\langle \rho(\mathscr{R})\rangle$.  The value $q:= dim\;R$ is called the \textbf{reactant
rank} of the network.
\end{definition}

In \cite{REACT2018}, the CRNs were classified based on the subspace $R\cap S$. The following are two interesting subsets of nontrivial intersection:

\begin{definition}
A CRN has a \textbf{stoichiometry-determined reactant subspace} (of type \textbf{SRS}) if its nonzero reactant subspace $R$ is contained in $S$, i.e. $0\neq R=R\cap S$. It has a \textbf{reactant-determined stoichiometric subspace} (of type \textbf{RSS}) if $S$ is contained in $R$, i.e., $R \cap S = S$. A CRN in SRS $\cap$ RSS has type \textbf{RES  (Reactant Subspace Equal to Stoichiometric Subspace)}, i.e., $R = S$.
\end{definition}

The dynamical system of the CRN of our running example can be written as

\begin{center}
	$\begin{array}{ccccc}
	R_1 & R_2 & R_3 & R_4&R_5\;\;\;\;\;\;\;\;\;\;\;\;\;\;\;\;\;
	\end{array}$\\
	$\dot{X}=
	 \left[
	\begin{array}{c}
	\dot{A_1}\\
	\dot{A_2}\\
	\dot{A_3}\\
	\dot{A_4}
	\end{array}
	\right]=  \left[
	\begin{array}{ccccc}
	-2& 0 &0&0 & -2\\
	0 & -1 & 1 & 1 & 0\\
	1 & 0 & 0 & 1 & 0\\
	0 & 0 & 0 &-3 & 3
	\end{array}	\right]
	 \left[
	\begin{array}{c}
	k_{13}A_1^{f_{11}}\\
	k_{23}A_2^{f_{21}}A_3^{f_{22}}\\
	k_{32}A_3^{f_{31}}\\
	k_{42}A_4^{f_{41}}\\
	k_{14}A_1^{f_{51}}
	\end{array}	\right]=NK(x).$
	\end{center}
$N$ is called the stoichiometric matrix and $K(x)$ is called the \textbf{kinetic vector} (or \textbf{kinetics}). The \textbf{chemical kinetic system (CKS)} is defined as follows.

\begin{definition}
A \textbf{kinetics} of a CRN $\mathscr{N}=(\mathscr{S, C, R})$ is an assignment of a rate function $K_{ij}: \Omega_K \rightarrow \mathbb{R}_{\geq 0}$  to each reaction $(i, j) \in \mathscr{R}$, where $\Omega_K$ is a set such that $\mathbb{R}^{\mathscr{S}}_{>0} \subseteq \Omega_K \subseteq \mathbb{R}^{\mathscr{S}}_{\geq 0}$, and
\begin{displaymath}
K_{ij}(c)\geq 0, \;\;\; for\;all\;c \in \Omega_K.
\end{displaymath}
A kinetics for a network $\mathscr{N}$ is denoted by $K=[K_1,K_2,\ldots,K_r]^T: \Omega_K \rightarrow \mathbb{R}^{\mathscr{S}}_{\geq 0}$.  The pair $(\mathscr{N} ,K)$ is called the \textbf{chemical kinetic system} (CKS).
\end{definition}

A chemical kinetics gives rise to two closely related objects: the species formation rate function (SFRF) and the associated ODE systems:

\begin{definition}
The \textbf{species formation rate function (SFRF)} of CKS is the vector field
\begin{displaymath}
f(x)=NK(x)= \sum_{y\rightarrow y'} K_{y \rightarrow y'} (x) (y'-y).
\end{displaymath}
where $N$ is the stoichiometric matrix. The equation $\dot{x}=f(x)$ is the \textbf{ODE} or \textbf{dynamical system} of the CKS.
\end{definition}

\begin{definition}
The {\textbf{set of positive equilibria}} of a chemical kinetic system $\left(\mathscr{N},K\right)$ is given by ${E_ + }\left(\mathscr{N},K\right)= \left\{ {x \in \mathbb{R}^\mathscr{S}_{>0}|f\left( x \right) = 0} \right\}.$
\end{definition}

Analogously, the {\bf set of complex balanced equilibria} \cite{HornJackson} is given by 
\[{Z_ + }\left(\mathscr{N},K\right) = \left\{ {x \in \mathbb{R}_{ > 0}^\mathscr{S}|{I_a} \cdot K\left( x \right) = 0} \right\} \subseteq {E_ + }\left(\mathscr{N},K\right).\]
A positive vector $c \in \mathbb{R}^\mathscr{S}$ is complex balanced if $K\left( c \right)$ is contained in ${\text{Ker }}{I_a}$, and a chemical kinetic system is complex balanced if it has a complex balanced equilibrium.

The ODE system is under \textbf{power law kinetics (PLK)} which has the form
\begin{displaymath}
K_i(x)=k_i \prod^m_{j=1} x^{F_{ij}} \;\;\; {\text{where}} \; 1 \leq i \leq r
\end{displaymath}
with $k_i \in \mathbb{R_+}$ and $F_{ij} \in \mathbb{R}$. Power law kinetics is defined by an $r \times m$ matrix $F = [F_{ij}]$, called the \textbf{kinetic order matrix}, and vector $k \in \mathbb{R}^r$, called the \textbf{rate vector}. A particular example of power law kinetics is the well-known mass action kinetics where the kinetic order matrix consists of stoichiometric coefficients of the reactants. In the running example, we assume power law kinetics so that the kinetic order matrix is


\begin{center}
$	\begin{array}{cccc}
	A_1 & A_2 & A_3 & A_4   
	\end{array}$\\
$F=	 \left[\begin{array}{cccc}
	F_{11} & 0 & 0& 0 \\
	0 & F_{22} & F_{32} & 0\\
	0 & 0 & F_{33} & 0  \\
	0 & 0 & 0 & F_{44} \\
	F_{15} & 0 & 0 & 0
	\end{array}
	\right]$
$\begin{array}{c}
	 R_1\\
	R_2\\
	R_3  \\
	R_4\\
	R_5
	\end{array}
$
\end{center}

\noindent where $F_{ij} \in \mathbb{R}$. 

\begin{definition}
A PLK system has {\textbf{reactant-determined kinetics}} (of type PL-RDK) if for any two reactions $i, j$ with identical reactant complexes, the corresponding rows of kinetic orders in $F$ are identical, i.e., ${F_{ik}} = {F_{jk}}$ for $k = 1,2,...,m$. A PLK system has {\textbf{non-reactant-determined kinetics}} (of type PL-NDK) if there exist two reactions with the same reactant complexes whose corresponding rows in $F$ are not identical.
\end{definition}

\subsection{Fundamentals of Decomposition Theory}
\label{sect:decomposition}

This subsection recalls some definitions and earlier results from the decomposition theory of chemical reaction networks. A more detailed discussion can be found in \cite{FAML2020}.

\begin{definition}
A {\textbf{decomposition}} of $\mathscr{N}$ is a set of subnetworks $\{\mathscr{N}_1, \mathscr{N}_2,...,\mathscr{N}_k\}$ of $\mathscr{N}$ induced by a partition $\{\mathscr{R}_1, \mathscr{R}_2,...,\mathscr{R}_k\}$ of its reaction set $\mathscr{R}$. 
\end{definition}

We denote a decomposition by 
$\mathscr{N} = \mathscr{N}_1 \cup \mathscr{N}_2 \cup ... \cup \mathscr{N}_k$
since $\mathscr{N}$ is a union of the subnetworks in the sense of \cite{ghms2019}. It also follows immediately that, for the corresponding stoichiometric subspaces, 
${S} = {S}_1 + {S}_2 + ... + {S}_k$.

The following important concept of independent decomposition was introduced by Feinberg in \cite{feinberg12}.

\begin{definition}
A network decomposition $\mathscr{N} = \mathscr{N}_1 \cup \mathscr{N}_2 \cup ... \cup \mathscr{N}_k$  is {\textbf{independent}} if its stoichiometric subspace is a direct sum of the subnetwork stoichiometric subspaces.
\end{definition}

It was shown that for an independent decomposition, $\delta \le \delta_1 +\delta_2 ... +\delta_k$ \cite{fortun2}.

\begin{definition} \cite{FAML2020}
A decomposition $\mathscr{N} = \mathscr{N}_1 \cup \mathscr{N}_2 \cup ... \cup \mathscr{N}_k$ with $\mathscr{N}_i = (\mathscr{S}_i,\mathscr{C}_i,\mathscr{R}_i)$ is a {\textbf{$\mathscr{C}$-decomposition}} if for each pair of distinct $i$ and $j$, $\mathscr{C}_i$ and $\mathscr{C}_j$ are disjoint.
\end{definition}

\begin{definition}
A decomposition of CRN $\mathscr{N}$ is {\textbf{incidence-independent}} if the incidence map $I_a$ of $\mathscr{N}$ is the direct sum of the incidence maps of the subnetworks. It is {\textbf{bi-independent}} if it is both independent and incidence-independent.
\end{definition}

We can also show incidence-independence by satisfying the equation $$n - l = \sum {\left( {{n_i} - {l_i}} \right)},$$ where $n_i$ is the number of complexes and $l_i$ is the number of linkage classes, in each subnetwork $i$.

In \cite{FAML2020}, it was shown that for any incidence-independent decomposition, $\delta \ge \delta_1 +\delta_2 ... +\delta_k$ and that $\mathscr{C}$-decompositions form a subset of incidence-independent decompositions.

Feinberg established the following basic relation between an independent decomposition and the set of positive equilibria of a kinetics on the network:

\begin{theorem} (Feinberg Decomposition Theorem \cite{feinberg12})
\label{feinberg:decom:thm}
Let $P(\mathscr{R})=\{\mathscr{R}_1, \mathscr{R}_2,...,\mathscr{R}_k\}$ be a partition of a CRN $\mathscr{N}$ and let $K$ be a kinetics on $\mathscr{N}$. If $\mathscr{N} = \mathscr{N}_1 \cup \mathscr{N}_2 \cup ... \cup \mathscr{N}_k$ is the network decomposition of $P(\mathscr{R})$ and ${E_ + }\left(\mathscr{N}_i,{K}_i\right)= \left\{ {x \in \mathbb{R}^\mathscr{S}_{>0}|N_iK_i(x) = 0} \right\}$ then
\[{E_ + }\left(\mathscr{N}_1,K_1\right) \cap {E_ + }\left(\mathscr{N}_2,K_2\right) \cap ... \cap {E_ + }\left(\mathscr{N}_k,K_k\right) \subseteq  {E_ + }\left(\mathscr{N},K\right).\]
If the network decomposition is independent, then equality holds.
\end{theorem}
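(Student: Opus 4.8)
The plan is to exploit the fact that the decomposition is induced by a \emph{partition} of the reaction set, so that the species formation rate function splits as a sum over the subnetworks, and then to invoke uniqueness of representation in a direct sum for the independence case.

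First I would record, for any $x \in \mathbb{R}^{\mathscr{S}}_{>0}$, the additive identity
\[
NK(x) = \sum_{y \to y'} K_{y\to y'}(x)(y'-y) = \sum_{i=1}^{k}\Big(\sum_{y\to y' \in \mathscr{R}_i} K_{y\to y'}(x)(y'-y)\Big) = \sum_{i=1}^{k} N_i K_i(x),
\]
which is valid precisely because $\{\mathscr{R}_1,\ldots,\mathscr{R}_k\}$ partitions $\mathscr{R}$ and each $K_i$ is the restriction of $K$ to $\mathscr{R}_i$. From this the first inclusion is immediate: if $x \in E_+(\mathscr{N}_i,K_i)$ for every $i$, that is $N_iK_i(x)=0$ for all $i$, then $NK(x)=\sum_i N_iK_i(x)=0$, hence $x \in E_+(\mathscr{N},K)$.

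For the converse under independence I would take $x \in E_+(\mathscr{N},K)$, so $\sum_{i=1}^{k} N_iK_i(x)=0$. The key observation is that each summand lies in the corresponding subnetwork stoichiometric subspace, $N_iK_i(x)\in S_i$, since by definition it is a linear combination of the reaction vectors $y'-y$ with $y\to y' \in \mathscr{R}_i$. Because the decomposition is independent, $S = S_1\oplus\cdots\oplus S_k$, so the representation of the zero vector as a sum of elements, one taken from each $S_i$, is unique; comparing with $0 = 0+\cdots+0$ forces $N_iK_i(x)=0$ for every $i$. Thus $x \in E_+(\mathscr{N}_i,K_i)$ for all $i$, giving the reverse inclusion and hence equality.

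Since both halves reduce to elementary linear algebra once the splitting of $NK$ is established, I do not expect a substantial obstacle; the only point demanding care is verifying that $N_iK_i(x)$ genuinely belongs to $S_i$ (and not merely to $S$), which is exactly what licenses the use of direct-sum uniqueness. I would also remark that positivity of $x$ is used only to restrict attention to $\mathbb{R}^{\mathscr{S}}_{>0}$; the identical argument yields the analogous statement for equilibria in $\mathbb{R}^{\mathscr{S}}_{\ge 0}$.
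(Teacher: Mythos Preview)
Your argument is correct and is precisely the standard proof of this result. Note, however, that the paper does not supply its own proof: the theorem is quoted from Feinberg \cite{feinberg12} as background in the decomposition-theory subsection, so there is no in-paper proof to compare against.
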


The analogue of Feinberg's 1987 result for incidence-independent decompositions and complex balanced equilibria is shown in \cite{FAML2020}:

\begin{theorem} (Theorem 4 \cite{FAML2020})
\label{decomposition:thm:2}
Let $\mathscr{N}=(\mathscr{S},\mathscr{C},\mathscr{R})$ be a a CRN and $\mathscr{N}_i=(\mathscr{S}_i,\mathscr{C}_i,\mathscr{R}_i)$ for $i = 1,2,...,k$ be the subnetworks of a decomposition.
Let $K$ be any kinetics, and $Z_+(\mathscr{N},K)$ and $Z_+(\mathscr{N}_i,K_i)$ be the set of complex balanced equilibria of $\mathscr{N}$ and $\mathscr{N}_i$, respectively. Then
\begin{itemize}
\item[i.] ${Z_ + }\left(\mathscr{N}_1,K_1\right) \cap {Z_ + }\left(\mathscr{N}_2,K_2\right) \cap ... \cap {Z_ + }\left(\mathscr{N}_k,K_k\right) \subseteq  {Z_ + }\left(\mathscr{N},K\right)$.\\
If the decomposition is incidence independent, then
\item[ii.] ${Z_ + }\left( {\mathscr{N},K} \right) = {Z_ + }\left(\mathscr{N}_1,K_1\right) \cap {Z_ + }\left(\mathscr{N}_2,K_2\right) \cap ... \cap {Z_ + }\left(\mathscr{N}_k,K_k\right)$, and
\item[iii.] ${Z_ + }\left( {\mathscr{N},K} \right) \ne \varnothing$ implies ${Z_ + }\left( {\mathscr{N}_i,K_i} \right) \ne \varnothing$ for each $i=1,...,k$.
\end{itemize}
\end{theorem}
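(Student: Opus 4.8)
The plan is to argue directly from the defining equation $Z_+(\mathscr{N},K) = \{x \in \mathbb{R}^\mathscr{S}_{>0} \mid I_a K(x) = 0\}$, exploiting that a decomposition is induced by a \emph{partition} $\{\mathscr{R}_1,\dots,\mathscr{R}_k\}$ of the reaction set. Writing $\mathbb{R}^\mathscr{R} = \mathbb{R}^{\mathscr{R}_1} \oplus \cdots \oplus \mathbb{R}^{\mathscr{R}_k}$ for the induced coordinate splitting, the kinetic vector decomposes as $K(x) = K_1(x) + \cdots + K_k(x)$ with $K_i(x) \in \mathbb{R}^{\mathscr{R}_i}$ (the subnetwork kinetics $K_i$ is literally the restriction of the rate functions of $K$ to the reactions in $\mathscr{R}_i$). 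Since the incidence map is defined reaction-by-reaction via $\omega_r \mapsto \omega_{C_j} - \omega_{C_i}$, its restriction to $\mathbb{R}^{\mathscr{R}_i}$ coincides, after the inclusion $\mathbb{R}^{\mathscr{C}_i} \hookrightarrow \mathbb{R}^\mathscr{C}$, with the incidence map $I_{a,i}$ of $\mathscr{N}_i$. Hence, for every $x \in \mathbb{R}^\mathscr{S}_{>0}$,
\[
I_a K(x) \;=\; \sum_{i=1}^{k} I_{a,i} K_i(x), \qquad I_{a,i} K_i(x) \in \operatorname{Im} I_{a,i}.
\]

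For part (i), if $x$ lies in every $Z_+(\mathscr{N}_i,K_i)$ then $I_{a,i}K_i(x) = 0$ for each $i$, so the displayed sum vanishes and $x \in Z_+(\mathscr{N},K)$; as with the analogous inclusion in the Feinberg Decomposition Theorem, no hypothesis on the decomposition is needed here. For part (ii), assume in addition that the decomposition is incidence-independent, i.e. $I_a$ is the direct sum of the $I_{a,i}$; concretely this means the subspaces $\operatorname{Im} I_{a,i} \subseteq \mathbb{R}^\mathscr{C}$ are in direct-sum position, equivalently $n - \ell = \sum_i (n_i - \ell_i)$ using $\operatorname{rank} I_{a,i} = n_i - \ell_i$. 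Take $x \in Z_+(\mathscr{N},K)$; then $0 = I_a K(x) = \sum_i I_{a,i}K_i(x)$ expresses $0$ as a sum of vectors from subspaces meeting only in $0$, which forces $I_{a,i}K_i(x) = 0$ for every $i$, that is, $x \in Z_+(\mathscr{N}_i,K_i)$. Combined with (i), this gives the claimed equality. Part (iii) is then immediate: if $Z_+(\mathscr{N},K)$ contains some $x$, then by (ii) that same $x$ belongs to each $Z_+(\mathscr{N}_i,K_i)$, so none of these sets is empty.

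The only genuine subtlety, and the step I would be most careful about, is the passage from the abstract phrase ``$I_a$ is the direct sum of the incidence maps of the subnetworks'' to the usable statement that $\sum_i u_i = 0$ with $u_i \in \operatorname{Im} I_{a,i}$ forces each $u_i = 0$. Because a general (non-$\mathscr{C}$-) decomposition may share complexes among the subnetworks, the codomains $\mathbb{R}^{\mathscr{C}_i}$ overlap, so ``direct sum'' must be read as independence of the \emph{images} inside $\mathbb{R}^\mathscr{C}$, not as a splitting of the ambient space; the clean way to pin this down is the rank identity $n - \ell = \sum_i (n_i - \ell_i)$ recorded above, since $\operatorname{Im} I_a = \sum_i \operatorname{Im} I_{a,i}$ holds for any decomposition and the sum is direct exactly when the dimensions add. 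Everything else is linearity of $I_a$ together with bookkeeping of the partition.
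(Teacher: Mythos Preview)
Your argument is correct and is the standard proof of this result. Note, however, that the present paper does not actually prove this theorem: it is quoted verbatim as Theorem~4 of \cite{FAML2020} and used as a black box, so there is no ``paper's own proof'' to compare against here. Your write-up would serve perfectly well as a self-contained justification, and your care in interpreting ``incidence-independent'' as independence of the images $\operatorname{Im} I_{a,i}$ inside $\mathbb{R}^{\mathscr{C}}$ (rather than a splitting of the ambient complex space, which can fail when subnetworks share complexes) is exactly the right point to flag.
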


The converse of Theorem \ref{decomposition:thm:2} iii holds for a subset of incidence-independent decompositions with any given kinetics.

\begin{theorem} (Theorem 5 \cite{FAML2020})
Let $\mathscr{N} = \mathscr{N}_1 \cup \mathscr{N}_2 \cup ... \cup \mathscr{N}_k$ be a weakly reversible $\mathscr{C}$-decomposition of a chemical kinetic system $(\mathscr{N},K)$. If ${Z_ + }\left( {\mathscr{N}_i,K_i} \right) \ne \varnothing$ for each $i=1,...,k$, then ${Z_ + }\left( {\mathscr{N},K} \right) \ne \varnothing$.
\end{theorem}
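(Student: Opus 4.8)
\noindent\textit{Proof proposal.} The plan is to move from the whole network to the subnetworks, use that complex balancing is a condition imposed complex-by-complex, and then glue a single equilibrium out of the subnetwork equilibria using the ``toric'' shape of complex-balanced loci. First I would record that a $\mathscr{C}$-decomposition is incidence-independent, so that Theorem~\ref{decomposition:thm:2}(ii) gives
\[
Z_+(\mathscr{N},K)=Z_+(\mathscr{N}_1,K_1)\cap Z_+(\mathscr{N}_2,K_2)\cap\cdots\cap Z_+(\mathscr{N}_k,K_k).
\]
Thus it suffices to produce a single positive vector that is complex balanced for every $\mathscr{N}_i$ simultaneously. By hypothesis each $Z_+(\mathscr{N}_i,K_i)$ is nonempty, so I would fix $x_i^{*}\in Z_+(\mathscr{N}_i,K_i)$.

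Next I would invoke the structure theorem for the complex-balanced locus of a weakly reversible kinetic system. Since each $\mathscr{N}_i$ is weakly reversible and $Z_+(\mathscr{N}_i,K_i)\neq\varnothing$, this locus is the log-coset through $x_i^{*}$: there is a subspace $W_i\subseteq\mathbb{R}^{\mathscr{S}}$ --- the stoichiometric subspace $S_i$ in the mass action case, and the corresponding kinetic-order subspace in the power law / poly-PL case --- with
\[
Z_+(\mathscr{N}_i,K_i)=\bigl\{\,x\in\mathbb{R}^{\mathscr{S}}_{>0}\ :\ \ln x-\ln x_i^{*}\in W_i^{\perp}\,\bigr\}.
\]
This is the step where weak reversibility and the kinetics class are genuinely used; I would cite it from Horn--Jackson theory (or its generalized mass action counterpart) rather than reprove it.

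It then remains to exhibit a common point $w$ of the affine flats $L_i:=\ln x_i^{*}+W_i^{\perp}$; the vector $x:=\exp(w)$ (taken coordinatewise) then lies in every $Z_+(\mathscr{N}_i,K_i)$, hence in $Z_+(\mathscr{N},K)$. I expect this to be the main obstacle, since a family of affine subspaces that intersect pairwise need not have a common point, so one must use the rigidity built into a \emph{weakly reversible $\mathscr{C}$-decomposition}: the disjointness of the complex sets should force the relevant subspaces to be independent, i.e. the sum $W_1+\cdots+W_k$ to be direct. Granting this, I would rewrite $w\in L_i$ as $\langle w,v\rangle=\langle \ln x_i^{*},v\rangle$ for all $v\in W_i$; because the sum of the $W_i$ is direct these prescriptions are mutually consistent and define a single linear functional on $W_1\oplus\cdots\oplus W_k$, which is represented (via the ambient inner product) by a vector $w$ in that subspace, and this $w$ lies in every $L_i$. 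The delicate point I would most want to pin down is precisely how weak reversibility together with the $\mathscr{C}$-decomposition hypothesis yields the directness of $\sum W_i$ and the compatibility between the stoichiometric and kinetic-order versions of these subspaces.
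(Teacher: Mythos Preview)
The paper does not prove this theorem; it is quoted verbatim from \cite{FAML2020} as background material, with no argument given. So there is no ``paper's own proof'' to compare against.

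That said, your proposal has two genuine gaps worth naming. First, the structure theorem you invoke---that $Z_+(\mathscr{N}_i,K_i)$ is a log-linear coset $\{x:\ln x-\ln x_i^{*}\in W_i^{\perp}\}$---is a result about specific kinetics classes (Horn--Jackson for mass action, M\"uller--Regensburger for generalized mass action). The theorem as stated here is for an arbitrary chemical kinetic system $(\mathscr{N},K)$; nothing in the hypotheses forces $K$ to be mass action, power law, or poly-PL. For a general kinetics the set $Z_+(\mathscr{N}_i,K_i)$ need not have any such log-affine description, so this step does not go through at the stated level of generality.

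Second, even restricting to kinetics where the log-coset description holds, your key structural claim---that the $\mathscr{C}$-decomposition hypothesis forces $W_1+\cdots+W_k$ to be a direct sum---is not justified and is in fact false in general. A $\mathscr{C}$-decomposition is incidence-independent (disjoint complex sets), but it need not be independent: the stoichiometric subspaces $S_i$ can overlap nontrivially even when the $\mathscr{C}_i$ are disjoint. For a concrete obstruction, take two weakly reversible linkage classes on disjoint complex sets whose reaction vectors span the same one-dimensional line; then $S_1=S_2$ and the sum is certainly not direct. You flag this as ``the delicate point,'' but it is not merely delicate---it is where the argument breaks. Any correct proof must either avoid the direct-sum hypothesis entirely or bring in additional structure (beyond disjointness of complexes) to guarantee a common point of the $L_i$.
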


\bigskip

\section{Poly-PL Kinetic Systems and their Transformations to PL Kinetic Systems }
\label{PYK:STAR:MSC}

In this paper, we are more interested in a kinetic system composed of
non-negative linear combinations of power law functions.

\textbf{Poly-PL kinetics (PYK) }are kinetic systems consisting of non-negative linear combinations of
power law functions. This set contains the set PLK of power law kinetics as ``mono-PL kinetics with
coefficient 1''. Like PLK, the domain of PYK is the positive orthant $\mathbb{R}^m_{> 0}$. However, for subsets, this may be extended to the whole non-negative orthant $\mathbb{R}^m_{\geq 0}$. Clearly, PYK and PLK generate the same sets of SFRFs, the power law dynamical systems (or GMA systems in BST terminology).

After setting the standard ordering of species $X_1, \ldots, X_m$, we have the following definition:

\begin{definition}
A kinetics $K : \mathbb{R}^m_{>0} \rightarrow  \mathbb{R}^r$ is a \textbf{poly-PL kinetics} if
\begin{equation}\label{ppkeq}
K_i(x)= k_i(a_{i,1}x^{F_{i,1}}+ \ldots + a_{i,j}x^{F_{i,j}}) \;\;{\text{where}}\; 1 \leq i \leq r
\end{equation}
written in lexicographic order with $k_i >0$, $F_{i,j}, a_{i,j} \in \mathbb{R}^m$ and $1 \leq j \leq h_i$ (where $h_i$ is the number of terms in reaction $i$). Power-law kinetics is defined by $r \times m$ matrices $F_{i,k} = [F_{ij}]$, called the \textbf{kinetic order} matrices, vectors $k = [k_i]$ called the \textbf{rate vector} and $a_{i,j}  \in \mathbb{R}^r_{> 0}$ called the \textbf{poly-rate vectors}.
\end{definition}

\begin{example}
For $(\mathscr{N},K)$ with $\mathscr{S} =\{X,Y\}$ and $\mathscr{R}=\{r:X\rightarrow2X,r':2X\rightarrow 5X+Y\}$, let the poly-PL kinetics be given by :
\begin{displaymath}
K_1 (X,Y)=k_1 (2XY+0.5Y^2 )\; \hspace{1cm}
K_2 (X,Y)=k_2 (0.75X^2 Y+X^3) 
\end{displaymath}
where $k_1$ and $k_2$ are rate constants. The kinetic order matrices are

\begin{center}
$F_{1,k}=	 \left[\begin{array}{cc}
	1 & 1 \\
	2 & 1
	\end{array}
	\right]$; \hspace{0.5cm} $F_{2,k}=	 \left[\begin{array}{cc}
	0 & 2 \\
	3 & 0
	\end{array}
	\right]$ 
\end{center}

with the kinetic order vector

\begin{center}
$k=	 \left[\begin{array}{c}
	k_1 \\
	k_2
	\end{array}
	\right]$.
\end{center}
\end{example}

STAR ($S$-invariant Termwise Addition of Reactions) is a network structure-oriented approach to poly-PL kinetics based on the following basic observation: for the rate function $K_i(x)$ and for a reaction $r_i: y_i \rightarrow y_i'$ in a PYK system $(\mathscr{N},K)$ with $\mathscr{N}=(\mathscr{S}, \mathscr{C}, \mathscr{R})$ we have
\begin{equation}\label{keq}
K_i(x)=k_i(a_{i1}M_{i1}+ \ldots + a_{ih}M_{ih})(y_i'-y_i)= k_i a_{i1}M_{i1}(y_i'-y_i)+ \ldots + k_i a_{ih}M_{ih} (y_i'-y_i)
\end{equation}
where $M_{ij}$ are the $h$ power law functions for the $i$-th reaction. 

A STAR method introduces additional different reaction(s) for each of the $h$ identical reaction vectors $y_i'-y_i$ in the sum. This enlarges the sets of reactions and complexes, so the new CRN $\mathscr{N}^*=(\mathscr{S}, \mathscr{C}^*, \mathscr{R}^*)$ and new kinetics $K^*: \mathbb{R}_{> 0}^{\mathscr{S}} \rightarrow \mathbb{R}^{\mathscr{R}^*}$ are constructed.

\begin{remark}
When the poly-PL kinetics do not have the same number of terms, one simply uses the same ``trick'' of replacing the last term of the shorter function with $(h -h'+ 1)$ copies of $\dfrac{1}{h - h' + 1}$ of that term. This representation of the poly-PL system is called the canonical PL-representation of the system.
\end{remark}

\begin{illustration}
Suppose the following are the poly-PL kinetics of a network:
\begin{displaymath}
K_1 (X,Y)=k_1(2X^3Y+X^2Y^2+ 4XY^2+0.5Y^3)
\end{displaymath}
\begin{displaymath}
K_2 (X,Y)=k_2(5X^4 Y^2+6XY^3).
\end{displaymath}
The above kinetics is equivalent to: 
\begin{displaymath}
K_1 (X,Y)=k_1(2X^3Y+X^2Y^2+ 4XY^2+0.5Y^3 )
\end{displaymath}
\begin{displaymath}
K_2 (X,Y)=k_2(5X^4 Y^2+2XY^3+2XY^3+2XY^3).
\end{displaymath}
\end{illustration}
Notice that $\mathscr{N}$ and $\mathscr{N}^*$ have the same set of species as one of requirements to be dynamically equivalent. Since it is $S$-invariant and it is a transformation, this assures the equality of the stoichiometric subspace  $S$ of the original network to the stoichiometric subspace $S^*$ of the network produced by adding complexes and reactions. This implies, $s= \dim S = \dim S^*=s^*$.

Aside from these observations, we have the following properties for any STAR method:

\begin{proposition}
Let $\mathscr{N}^*=(\mathscr{S}, \mathscr{C}^*, \mathscr{R}^*)$ be a STAR transform of $\mathscr{N}=(\mathscr{S}, \mathscr{C}, \mathscr{R})$. Then  $|\mathscr{R}^*|=hr$ and $|\mathscr{C}^*| \leq hn$.
\end{proposition}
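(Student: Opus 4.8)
The plan is to read off both cardinalities directly from the STAR construction, handling the reaction count first (where equality is claimed) and then the complex count (where only an inequality is claimed). Throughout I would work with the canonical PL-representation guaranteed by the remark preceding the Illustration: if the numbers of terms $h_i$ are not all equal, replace the last term of each shorter rate function $K_i$ by $(h - h_i + 1)$ copies of $\tfrac{1}{h - h_i + 1}$ of it, where $h = \max_i h_i$; this changes no $K_i$, hence not the SFRF, and now every reaction of $\mathscr{N}$ carries exactly $h$ power-law terms. By definition a STAR method performs a \emph{termwise} addition of reactions, so each original reaction is expanded into exactly $h$ reactions, one per term.

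For $|\mathscr{R}^*| = hr$: write $\mathscr{R} = \{r_1,\dots,r_r\}$ with $r_i : y_i \to y_i'$. By (\ref{keq}) the contribution of $r_i$ splits into $h$ summands, each carrying the common reaction vector $y_i' - y_i$. A STAR method retains $r_i$ for the first summand and, for each of the remaining $h - 1$ summands, introduces one new reaction with reaction vector $y_i' - y_i$ whose reactant and product complexes are not among the existing complexes; in STAR-MSC these $h - 1$ extra reactions are produced by translating every reactant and product complex of $\mathscr{N}$ by a fixed vector $t_j$ (one $t_j$ per summand $j = 2,\dots,h$), so summand $j$ contributes a full translated replica $\mathscr{N}_{j-1}$ of $\mathscr{N}$. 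Since translation by a fixed vector is a bijection preserving reaction vectors, each replica has exactly $r$ reactions, and the $t_j$ are chosen (via the maximal stoichiometric coefficient) so that distinct replicas and $\mathscr{N}$ itself involve pairwise disjoint complex sets; hence the $r + (h-1)r$ reactions are pairwise distinct and $|\mathscr{R}^*| = hr$.

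For $|\mathscr{C}^*| \le hn$: from the same description $\mathscr{C}^* = \mathscr{C} \cup \mathscr{C}_1 \cup \dots \cup \mathscr{C}_{h-1}$, where $\mathscr{C}_j$ is the complex set of the $j$-th replica. Each $\mathscr{C}_j$ is a translate of $\mathscr{C}$, so $|\mathscr{C}_j| = |\mathscr{C}| = n$, and therefore $|\mathscr{C}^*| \le |\mathscr{C}| + \sum_{j=1}^{h-1} |\mathscr{C}_j| = n + (h-1)n = hn$. The MSC choice of translations in fact makes these sets pairwise disjoint, so equality holds for STAR-MSC, but the stated bound only needs subadditivity of cardinality under unions.

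The only part requiring genuine care — everything else being bookkeeping — is justifying the novelty and mutual disjointness of the translated complexes, i.e. that no complex of a replica coincides with a complex of $\mathscr{N}$ or of another replica; this is exactly the role of the maximal stoichiometric coefficient, since choosing each translation amount strictly larger, coordinatewise, than every stoichiometric coefficient occurring in $\mathscr{C}$ (and spacing successive replicas accordingly) forces all translated complexes into a region disjoint from $\mathscr{C}$ and from one another, which secures the exact count $|\mathscr{R}^*| = hr$. A minor point to dispatch at the outset is that padding with repeated copies of the last term can make several of the $h$ summands of a reaction share the same monomial; this causes no collision of reactions, because those summands still live in different replicas, and no loss of dynamic equivalence, because the split copies sum back to the original term.
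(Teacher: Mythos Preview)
Your proof is correct and follows the same line as the paper's, which simply notes that by the definition of a STAR method each of the $r$ original reactions is replaced by $h$ \emph{different} reactions (one per summand in (\ref{keq})), giving $|\mathscr{R}^*|=hr$, and that $hn$ is the largest possible complex count when those $hr$ reactions are organized into $h$ replicas of $\mathscr{N}$. Your extended discussion of the MSC translation mechanism to secure distinctness is more than is needed here---the proposition is stated for a general STAR transform, and the distinctness of the added reactions is already built into that definition---but it is not wrong and becomes relevant once one specializes to STAR-MSC.
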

\begin{proof}
For the first part, since STAR method introduces an additional different reaction for each of the $h$ identical reaction vectors $y_i'- y_i$ in the sum then claim follows immediately from (\ref{keq}) and the fact that $hn$ is the maximum number of complexes (i.e., when these are all different) for $hr$ reactions.
\end{proof}

\begin{remark} 
Observe that $K^*_{ij}=k^*_{ij}M_{ij}$, with $k^*_{ij}=k_ia_{ij}$, is the kinetic function of the reaction $R_{ij}$ corresponding to the reaction vector $(y_i' -y_i)$. The stoichiometric matrix $N^*$ becomes an $m \times hr$ matrix, and usually the reaction vector of the reaction $r_i$ are just replicated $h$ times as assumed in (ii).
\end{remark}

Let the reaction vector $y_i'- y_i=(c_{i1}, c_{i2}, \ldots, c_{im})$. By the remark above, we have

\begin{center}
$	\begin{array}{ccccccccccccc}
	R_{11}& R_{12} & \ldots & R_{1h}&  R_{21}& R_{22} & \ldots & R_{2h}& \ldots&  R_{r1}& R_{r2} & \ldots & R_{rh}
	\end{array}$\\
$N^*=	 \left[\begin{array}{ccccccccccccc}
	c_{11} & c_{11}& \ldots & c_{11}& c_{21} & c_{21}& \ldots & c_{21} & \ldots & c_{r1} & c_{r1}& \ldots & c_{r1} \\
	c_{12} & c_{12}& \ldots & c_{12}& c_{22} & c_{22}& \ldots & c_{22} & \ldots & c_{r2} & c_{r2}& \ldots & c_{r2} \\
	\vdots &\vdots& \ddots & \vdots& \vdots & \vdots& \ddots & \vdots & \ddots & \vdots & \vdots& \ddots & \vdots \\
	c_{1m} & c_{1m}& \ldots & c_{1m}& c_{2m} & c_{2m}& \ldots & c_{2m} & \ldots & c_{rm} & c_{rm}& \ldots & c_{rm}
	\end{array}
	\right]$
$\begin{array}{c}
	 A_1\\
	A_2\\
	\vdots  \\
	A_m
	\end{array}
$ 
\end{center}
and

\begin{center}
$K^*=	 \left[\begin{array}{c}
	K^*_{11} \\
	K^*_{12} \\
	\vdots\\
	K^*_{1h} \\
	K^*_{21} \\
	K^*_{22} \\
	\vdots\\
	K^*_{2h} \\
	\vdots\\
	K^*_{r1} \\
	K^*_{r2} \\
	\vdots\\
	K^*_{rh} \\
	\end{array}
	\right]$.

\end{center}

Hence,

\begin{align*}
N^*K^*&= \left[\begin{array}{c}
	c_{11}K^*_{11}+ c_{11}K^*_{12}+\ldots+c_{11}K^*_{1h}+ 
\ldots + c_{r1}K^*_{r1}+ c_{r1}K^*_{r2}+\ldots+c_{r1}K^*_{rh}\\
	c_{12}K^*_{11}+ c_{12}K^*_{12}+\ldots+c_{12}K^*_{1h}+ \ldots + c_{r2}K^*_{r1}+ c_{r2}K^*_{r2}+\ldots+c_{r2}K^*_{rh}\\
	\vdots\\
	c_{1m}K^*_{11}+ c_{1m}K^*_{12}+\ldots+c_{1m}K^*_{1h}+ \ldots + c_{rm}K^*_{r1}+ c_{rm}K^*_{r2}+\ldots+c_{rm}K^*_{rh}
	\end{array}
	\right]\\
&=	 \left[\begin{array}{c}
	c_{11}(K^*_{11}+ K^*_{12}+\ldots+K^*_{1h})+ 
\ldots + c_{r1}(K^*_{r1}+ K^*_{r2}+\ldots+K^*_{rh})\\
	c_{12}(K^*_{11}+ K^*_{12}+\ldots+K^*_{1h})+ \ldots + c_{r2}(K^*_{r1}+ K^*_{r2}+\ldots+K^*_{rh})\\
	\vdots\\
	c_{1m}(K^*_{11}+ K^*_{12}+\ldots+K^*_{1h})+ \ldots + c_{rm}(K^*_{r1}+ K^*_{r2}+\ldots+K^*_{rh})
	\end{array}
	\right]\\
&=	 \left[\begin{array}{c}
	c_{11}K_1+ c_{21}K_2 + \ldots +c_{r1}K_r\\
	c_{12}K_1+ c_{22}K_2 + \ldots + c_{r2}K_r\\
	\vdots\\
	c_{1m}K_1+ c_{2m}K_2+ \ldots + c_{rm}K_r
	\end{array}
	\right]\\
&=	 \left[\begin{array}{cccc}
	c_{11}&c_{21}& \ldots &c_{r1}\\
	c_{12}& c_{22}& \ldots & c_{r2}\\
	\vdots\\
	c_{1m}& c_{2m}& \ldots &c_{rm}
	\end{array}
	\right]  \left[\begin{array}{c}
	K_1\\
	K_2\\
	\vdots\\
	K_r
	\end{array}
	\right]\\
&=NK.
\end{align*}

\medskip

Thus, $f^*=N^*K^*=NK=f$ and so $(\mathscr{N}^*, K^*)$ is dynamically equivalent to $(\mathscr{N},K)$ under STAR transformation.

One particular idea of STAR transformation is to use the maximal stoichiometric coefficient (MSC) among the complexes in the CRN to construct reactions whose reactant complexes and product complexes are different from existing ones. This is done by uniform translation of the reactants and products per linkage class. The method creates $h$ replicas of each linkage class, and hence $\mathscr{N}^*_M$ becomes the union.
 
We now describe \textbf{S-invariant Termwise Addition of Reactions Via Maximal Stoichiometric Coefficients (STAR-MSC)}  in detail. Note that the definition domain of a poly-PL kinetics is $\mathbb{R}^{\mathscr{S}}_{> 0}$. Hence, all $x=(x_1,\ldots,x_m)$ are positive vectors. Let $M = 1 + \max \{y_i | y \in \mathscr{C}\}$, where the second summand is the maximal stoichiometric coefficient (a positive integer). Admitting redundancy in notations, for every positive integer $z$, let $z$ be identified with the vector $(z,z,\ldots,z)$ in $\mathbb{R}^{\mathscr{S}}$.  Let $\mathscr{L}_1,\ldots,\mathscr{L}_\ell$ be the linkage classes of $\mathscr{N}$.  
 
For each complex $y \in \mathscr{C}$, form the $(h-1)$ complexes 
\begin{displaymath}
y + M, y + 2M, \ldots, y + (h -1)M.  
\end{displaymath}

If $y$ and $y'$ are different complexes but $\ds y + jM = y'+ j'M$ and say, $j' \geq j$, we have $y - y'= (j'- j)M$. Note that we have a contradiction, since the RHS of this equation is either the zero vector or a positive one with all coefficients greater than $M - 1$ and on the LHS is a  vector with coefficients less than $M - 1$.
This implies that for each reaction $r_i \in \mathscr{R}$, we also obtain $(h -1)$ reactions different from each other and all existing reactions. 

We denote these reactions with $r_{ij}$. In this way, we obtain $(h-1)$ different replicas of $\mathscr{N}$  which we denote by $\mathscr{N}_2,\ldots  \mathscr{N}_h$. For convenience, we set $\mathscr{N}_1 = \mathscr{N}$.  We introduce STAR-MSC transfrom as $\mathscr{N}^*_M= \bigcup \mathscr{N}_i$.

For a reaction $r_{ij}$ in $\mathscr{N}_j$ , $j = 1,\ldots,h$, we now define the rate function $K^*_{M\;ij} (x) = k_i a_{ij}M_{ij}$. 

\begin{example}
\upshape{
For $(\mathscr{N},K)$,  suppose $\mathscr{S} =\{X,Y\}$, $\mathscr{C}=\{X, 2X, 2X+3Y,X+2Y\}$ and  $\mathscr{R}=\{r_1:X\rightarrow2X,r_2:2X+3Y\rightarrow X+2Y\}$. Let the poly-PL kinetics be given by:
\begin{displaymath}
K_{r_1} (X,Y)=k_1 (5X^2Y^3+Y^2 )\; \hspace{1cm}
K_{r_2} (X,Y)=k_2 (0.3X^2 Y+0.4X^3) 
\end{displaymath}
where $k_1$ and $k_2$ are usual rate constants.

As observed, the maximal stoichiometric coefficient is 3. Hence, $M=4X+4Y$. With this, we now generate the STAR-MSC transform $(\mathscr{N}^*_M,K^*_M)$ where $\mathscr{S}=\{X,Y\}$. 

The additional complexes are $\{5X+4Y, 6X+4Y, 6X+7Y, 5X+6Y\}$ and  the additional reactions are $\{r_1^*: 5X+4Y \rightarrow 6X+4Y, r_2^*: 6X+7Y \rightarrow 5X+6Y\}$. This means that $R^*_M=\{r_1:X\rightarrow2X, r_1^*: 5X+4Y \rightarrow 6X+4Y, r_2:2X+3Y\rightarrow X+2Y, r_2^*: 6X+7Y \rightarrow 5X+6Y\}$ with the corresponding kinetic functions:

\begin{displaymath}
K_{r_1}^* (X,Y)= 5k_1 X^2Y^3=k_1^* X^2Y^3 \; \hspace{1cm} K_{r_1^*}^* (X,Y)= k_1Y^2=k_1^{**} Y^2
\end{displaymath}
\begin{displaymath}
K_{r_2}^* (X,Y)=0.3k_2 X^2 Y=k_2^* X^2 Y \; \hspace{1cm}
K_{r_2^*}^* (X,Y)= 0.4k_2 X^3=k_2^{**} X^3.
\end{displaymath}
}
\end{example}

Throughout this paper, we use the following as our running example:

\begin{example}
\upshape{\textit{(Running Example):}
For $(\mathscr{N},K)$,  suppose $\mathscr{S} =\{X,Y\}$, $\mathscr{C}=\{5X+Y, X+3Y\}$ and  $\mathscr{R}=\{r_1:5X+Y\rightarrow X+3Y,r_2:X+3Y\rightarrow 5X+Y\}$. Let the poly-PL kinetics be given by the following (with $k_1$ and $k_2$ are the rate constants as usual).
\[
K_{r_1} (X,Y)=k_1 (\alpha_1 X^2Y+  \alpha_2 XY^2 + \alpha_3 X^2Y^2 +\alpha_4 X^3Y )
\]
\[
K_{r_2} (X,Y)=k_2 (\beta_1 X+  \beta_2 Y + \beta_3 XY + \beta_4 X^2) 
\]

Since the maximal stoichiometric coefficient is 5, $M=6X+6Y$. Hence, we generate STAR-MSC transform $(\mathscr{N}^*_M,K^*_M)$ where $\mathscr{S}=\{X,Y\}$. The following will be our subsetworks:
\begin{enumerate}[i.]
\item $\mathscr{N}_1=\mathscr{N}: \mathscr{C}_1= \{5X+Y, X+3Y\}$ and $\mathscr{R}_1=\{r_1:5X+Y\rightarrow X+3Y,r_2:X+3Y\rightarrow 5X+Y\}$;
\item $\mathscr{N}_2: \mathscr{C}_2= \{11X+7Y, 7X+9Y\}$ and  $\mathscr{R}_2=\{r_1^*:11X+7Y\rightarrow 7X+9Y,r_2^*:7X+9Y\rightarrow 11X+7Y\}$;
\item $\mathscr{N}_3: \mathscr{C}_3= \{17X+13Y, 13X+15Y\}$ and  $\mathscr{R}_3=\{r_1^{**}:17X+13Y\rightarrow 13X+15Y,r_2^{**}:13X+15Y\rightarrow 17X+13Y\}$
\item $\mathscr{N}_4: \mathscr{C}_4= \{23X+19Y, 19X+21Y\}$ and  $\mathscr{R}_4=\{r_1^{***}:23X+19Y\rightarrow 19X+21Y,r_2^{***}:19X+21Y\rightarrow 23X+19Y\}$.
\end{enumerate}

With this, $\ds \mathscr{N}^*_M= \bigcup_{i=1}^4 \mathscr{N}_i$. The following are the corresponding kinetic functions:
\begin{align*}
K_{r_1}^* (X,Y)&=k_1^* X^2Y \qquad \quad \ \  K_{r_1^*}^* (X,Y)=k_1^{**} XY^2\\
K_{r_1^{**}}^* (X,Y)&=k_1^{***} X^2Y^2 \qquad K_{r_1^{***}}^* (X,Y)=k_1^{****} X^3Y\\
K_{r_2}^* (X,Y)&=k_2^* X \qquad \qquad \ \ \ K_{r_2^*}^* (X,Y)=k_2^{**} Y\\
K_{r_2^{**}}^* (X,Y)&=k_2^{***} XY \qquad \ \  K_{r_2^{***}}^* (X,Y)=k_2^{****} X^2.
\end{align*}
}
\end{example}

With regard to the network numbers, Table \ref{table:networknumbers} shows the comparison between the original network and the STAR-MSC transform.

\begin{table}[ht]
\caption{Network Numbers of STAR-MSC Transform}
\label{table:networknumbers}
\centering
\begin{tabular}{p{5.5cm}l}
& \\
\hline
\textbf{Network Number} & \textbf{Value/Bounds} \\ \hline
Number of species & $m^*_M = m$ \\ \hline
Number of complexes & $n^*_M= hn$ \\ \hline
Number of reactant complexes
& $n^*_{r\ M} = hn_r$ \\ \hline
Number of reactions & $r^*_M= hr$ \\ \hline
Number of linkage classes & $\ell^*_M= h\ell$ \\ \hline
Number of strong linkage & \multirow{2}{*}{$(s\ell)^*_M= h(s\ell)$}\\
classes & \\ \hline
Number of terminal strong & \multirow{2}{*}{$t^*_M= ht$}\\
linkage classes &  \\ \hline
Rank of network & $s^*_M= s$ \\ \hline
Reactant rank of the network & $q^*_M= \left\{
        \begin{array}{ll}
            q & if\;\;(1,1,\ldots,1) \in R \\
            q+1 & if\;\;(1,1,\ldots,1) \notin R
        \end{array}
    \right.$ \\ \hline
Rank difference & $\Delta(\mathscr{N})^*_M= \left\{
        \begin{array}{ll}
            \Delta(\mathscr{N}) & if\;\;(1,1,\ldots,1) \in R\\
            \Delta(\mathscr{N})-1 & if\;\;(1,1,\ldots,1) \notin R
        \end{array}
    \right.$ \\ \hline
Deficiency of the network & $\delta^*_M= \delta + (h-1) (n-\ell)$ \\ \hline
Reactant deficiency of the network & $\delta^*_{\rho\;M} \geq \delta_\rho$ \\ 
& $\delta^*_{\rho\;M} = \left\{
        \begin{array}{ll}
            hn_r-q & if\;\;(1,1,\ldots,1) \in R \\
            hn_r-q-1 & if\;\;(1,1,\ldots,1) \notin R
        \end{array}
    \right.$ \\ \hline
\end{tabular}
\end{table}

Note that when we generate the STAR-MSC transform, no species were added and so $m^* = m$.

On the other hand, the number of complexes, reactant complexes, reactions, linkage classes, strong linkage classes and terminal strong linkage classes are all dependent on results of the following  facts:  (a) all new complexes are unique and different from all existing complexes; (b) all new reactions are unique and different from all existing reactions; and (c) $\mathscr{N}^*_M$ is composed of $h$ replicas of $\mathscr{N}$.

For the rank of the network, observe that the additional reaction vectors are of the form $(y'+ jM)-(y+jM)=y'-y$. With this, all additional reaction vectors are part of $S$. Hence, $S^*=S$ and so $s^*=s$.

Note that an arbitrary $M$ in the process of STAR-MSC is an element of $\langle(1,1, \ldots,1) \rangle$. This means that, based on the construction of additional complexes,  the reaction subspace of the STAR-MSC transform is $R^*_M= \langle \rho(\mathscr{R}) \cup \{(1,1, \ldots, 1)\}\rangle$. This is the basis of the values of reactant rank, rank difference and reactant deficiency of the tranformed network.

As with the deficiency of the network, it is clear that 
\begin{displaymath}
\delta^*_M =n^*_M-\ell^*_M-s^*_M= hn-h\ell-s = (h-1)(n-\ell)+n-\ell-s=\delta+ (h-1)(n-\ell).
\end{displaymath}

Since the $\mathscr{N}^*_1 = \mathscr{N}$ and the $(h -1)$ $\mathscr{N}$-replicas $\mathscr{N}^*_2, \ldots, \mathscr{N}^*_h$ form a $\mathscr{C}$-decomposition of $\mathscr{N}^*$, most network properties of $\mathscr{N}$ are also valid for $\mathscr{N}^*$. These include frequently required properties such as:

\begin{enumerate}[i)]
\item (weak) reversibility,
\item $t$-minimality,
\item cycle (point) terminality,
\item terminality bounded by deficiency (TBD),
\item sufficient reactant diversity (SRD), and
\item stoichiometric subspace containment in the reactant subspace (SRS).
\end{enumerate}

 There are however some network properties which may not be preserved under STAR-MSC. The property TND may not be invariant as the following equation show:
\begin{displaymath}
t^*-\ell^*-\delta^*=ht-h\ell-\delta+(h-1)(n-\ell)=(h-1)(t-n)+(t-\ell-\delta).
\end{displaymath}

The same holds for LRD.  Since the reactant rank is in general not determined by the numbers of $\mathscr{N}$, it is also a source of variance. In general, $R^*$ is a superset of $R$, so that $im\; Y^* = R^* + S^* = R^* + S$ may vary too.  The properties RSS and RES may be affected too. For a systematic analysis of the relationships between network and subnetwork properties under decompositions, see \cite{FOMF2020}.

Since STAR-MSC is a dynamic equivalence, all purely kinetic properties are also maintained. However, some key structo-kinetic properties are not, e.g., if on a cycle terminal network, the poly-PL kinetics is factor span surjective (i.e., $span\; (im\; \psi K) = R)^{\mathscr{C}}$, where $\psi K$ is the factor map of the PY-RDK kinetics $K$), the PL-RDK kinetics $K^*$ need not be factor span surjective.

A very important relationship holds between the sets of complex balanced equilibria of $\mathscr{N}^*$ and those of the $\mathscr{N}^*_j$ as shown in \cite{FAML2020}: ${Z}_+(\mathscr{N}^*,K^*)={Z}_+(\mathscr{N}^*_1,K^*_1) \cap \ldots \cap {Z}_+(\mathscr{N}^*_h,K^*_h)$. In fact, the following equivalence holds: ${Z}_+(\mathscr{N}^*,K^*)\neq \phi \Leftrightarrow {Z}_+(\mathscr{N}^*_j,K^*_j)\neq \phi$ for $j=1, \ldots, h$.

\section{Application of the Multistationarity Algorithm\\ (MSA) to a STAR-MSC Transform}
\label{application:MSA:STAR}
In this section, we present a review of the MSA for PLK systems and how we can further extend the algorithm to determine the multistationarity of PYK kinetic systems via the STAR-MSC transformation.

\subsection{A review of the MSA for PLK systems}

In 2011, Ji and Feinberg \cite{ji} introduced the higher deficiency algorithm (HDA). It is a general method of solving whether a chemical reaction network (CRN) endowed with mass action kinetics (MAK) has the capacity to admit multiple equilibria. This was extended by Hernandez et al. \cite{hmr2019} in 2020 for CRNs endowed with power law kinetics, which is a superset of MAK, and was called the Multistationarity Algorithm (MSA) for power law kinetic (PLK) systems.
The MSA combines the extension of the HDA and CF transformations.
If we have a PL-RDK system, then the extended HDA can be directly applied. For a PL-NDK system, the CF-RM method in \cite{neml2019} is used to transform the system to a ``dynamically equivalent'' PL-RDK system \cite{hmr2019}.

The fundamental decomposition of a CRN is the set of subnetworks generated by the partition of its reaction set into ``fundamental classes'', which is actually the basis of the HDA. A CRN has independent decomposition if the network's stoichiometric subspace is the direct sum of the subnetworks' stoichiometric subspaces.
It was shown in \cite{hmr22019} that for any CRN with independent fundamental decomposition, the CF-RM transformation, required in the MSA, is not necessary. Hence, we can directly apply the extended HDA for RDK or NDK systems with independent fundamental decompositions. Otherwise, the CF-RM transformation is used to convert the system to a PL-RDK system before applying the extended HDA. A MATLAB program which determines whether a CRN has independent fundamental decomposition is provided in \cite{bsh2020}.

The main contribution of this paper is to provide a general method of solving the problem of multistationarity for a much larger class of kinetic systems: CRNs endowed with poly-PL kinetics, i.e., kinetics which are non-negative linear combinations of power law functions.

For the detailed process and the steps of the MSA, the reader may refer to \cite{ji, {hmr2019}}. We now introduce the concepts of orientations and fundamental classes for the preliminary steps of the MSA. An {\bf orientation} $\mathscr{O}$ is a subset of $\mathscr{R}$ such that for every reaction $y \to y' \in \mathscr{R}$, either $y \to y' \in \mathscr{O}$ or $y' \to y \in \mathscr{O}$, but not both. For a given orientation $\mathscr{O}$, we define a linear map ${L_\mathscr{O}}:{\mathbb {R}^\mathscr{O}} \to S$ such that
\[{L_\mathscr{O}}(\alpha)= \sum\limits_{y \to y' \in \mathscr{O}} {{\alpha _{y \to y'}}\left( {y' - y} \right)}.\]
Now, let $\left\{ {{v^l}} \right\}_{l = 1}^d$ be a basis for ${\text{Ker }}{L_\mathscr{O}}$.
If for $y \to y' \in \mathscr{O}$, $v_{y \to y'}^l =0$ for all $1 \le l \le d$ then the reaction $y \to y' $ belongs to the zeroth equivalence class $P_0$.
For $y \to y', {\overline y  \to \overline y '} \in \mathscr{O} \backslash P_0$, if there exists $\alpha \ne 0$ such that $v_{y \to y'}^l = \alpha v_{\overline y  \to \overline y '}^l$ for all $1 \le l \le d$, then the two reactions are in the same {\bf equivalence class} denoted by $P_i$, $i \ne 0$.
On the other hand, the reactions $y \to y'$ and $\overline y  \to \overline y '$ in $\mathscr{R}$ belong to the same {\bf fundamental class} if at least one of the following is satisfied:
\begin{itemize}
\item[i.] $y \to y'$ and $\overline y  \to \overline y '$ are the same reaction.
\item[ii.] $y \to y'$ and $\overline y  \to \overline y '$ are reversible pair.
\item[iii.] Either $y \to y'$ or $y' \to y$, and either $\overline y  \to \overline y '$ or $\overline y'  \to \overline y $ are in the same equivalence class on $\mathscr{O}$.
\end{itemize}
Hence, the orientation $\mathscr{O}$ is partitioned into equivalence classes while the reaction set $\mathscr{R}$ is partitioned into fundamental classes.

We now briefly describe the steps of the MSA.

\noindent {\bf STEP 1}: Choose an orientation.

\noindent {\bf STEP 2}: Partition the orientation into equivalence classes and the reaction set into fundamental classes. If at least one of the following conditions is not satisfied, we conclude that the system does not have the capacity to admit multiple equilibria:
\begin{itemize}
\item[i.] All reactions in $P_0$ are reversible (with respect to $\mathscr{R}$).
\item[ii.] For two irreversible reactions (with respect to $\mathscr{R}$), $y \to y'$ and ${\overline y  \to \overline y '}$ in the same $P_i$, there exists $\alpha > 0$ such that $v_{y \to y'}^l = \alpha v_{\overline y  \to \overline y '}^l$ for all $1 \le l \le d$.
\end{itemize}
\noindent {\bf STEP 3}: Find the colinkage sets. Each fundamental class corresponds to a subnetwork of the whole network. For each subnetwork, we identify terminal and nonterminal strong linkage classes.

\noindent {\bf STEP 4}: Pick $W \subseteq {\mathscr{O}}$.
An equivalence class is reversible if all of its reactions are reversible with respect to the original network $\mathscr{R}$. It is nonreversible, if it contains an irreversible reaction.
In this step, we pick a representative reaction for each of the $P_i$'s such that:
If a class $P_i$ is nonreversible, we pick an irreversible reaction. Otherwise, we pick any reversible reaction.

\noindent {\bf STEP 5}: Realign the orientation, if needed.
If the following statement is not satisfied, we choose another orientation until it is satisfied.
For $P_i$ with $1 \le i \le w$, for any reaction ${y \to y'}$ in $P_i$, there exists an $\alpha_{y \to y'} >0$ such that $v_{y_i \to y'_i}^l = \alpha_{y \to y'} v_{y  \to  y '}^l$ for all the elements $v_1, v_2,...,v_d$ of the chosen basis for ${\text{Ker }}L_{\mathscr{O}}$.

\noindent {\bf STEP 6}: Find a basis for ${\text{Ker}}^{\perp}{\text{ }}L_{\mathscr{O}}\cap \Gamma_W$ where
${\Gamma _W} = \left\{ {x \in {\mathbb{R}^\mathscr{O}}|x{\rm{ \ has \ support \ in \ }}W} \right\}$ and $W = \left\{ {{y_i} \to {y_i}'|i = 1,...,w} \right\} \subseteq \mathscr{O}$.

\noindent {\bf STEP 7}: Check the linearity of the system of inequalities.
If there is a ``forest basis'' (see \cite{{hmr2019},ji}) for ${\text{Ker}}^{\perp}{\text{ }}L_{\mathscr{O}}\cap \Gamma_W$, then the resulting inequality system is linear. Otherwise, some nonlinear constraints might be needed and hence the system may be nonlinear.

\noindent {\bf STEP 8}: Choose signs for vectors ${g_W}=g|_W,{h_W}=h|_W \in  {\mathbb{R}^\mathscr{O}} \cap \Gamma _W$.
We choose two sign patterns for ${\mathbb{R}}^{\mathscr{O}}$ such that the following hold:
\begin{itemize}
\item[i.] The sign patterns are not the zero vector at the same time.
\item[ii.] Both of the sign patterns are sign-compatible (the signs of corresponding components agree with each other) with ${\text{Ker }}L_{\mathscr{O}}$.
\item[iii.] For each sign pattern and each nonreversible equivalence class $P_i$, the sign assigned on the representative (and all reactions in $P_i$) is positive.
\end{itemize}

We need to introduce the following definitions before we proceed with the next step.

\begin{definition} {\bf \cite{muller}}
\label{tmatrix}
The $m \times n$ matrix $\widetilde Y$ is given by 
\[{\left( {\widetilde Y} \right)_{ij}} = \left\{ \begin{array}{cl}
{\left( F \right)_{ki}} & {{\rm{   if}} \ j \ {\rm{ is \ a \ reactant \ complex \ of \ reaction \ }} k},\\
0  &     {\rm  otherwise}.
\end{array} \right.\]
\end{definition}

\begin{definition}
The $T$-matrix is the $m \times n_r$ truncated $\widetilde Y$ matrix where the nonreactant columns are removed.
\end{definition}

For the succeeding discussions, for a reaction ${y\to y'}$, $T_{.y}$ refers to the column of the $T$-matrix associated with the reactant complex $y$.

\noindent {\bf STEP 9}: Put reactions in the nondegenerate $C_i$'s into shelves.
A fundamental class $C_i$ with $0 \le i \le w$ is called degenerate if $g_{y_i\to y_{i}'}=0$ while a fundamental class $C_i$ with $1 \le i \le w$ is called nondegenerate if $g_{y_i\to y_{i}'}\ne 0$. For each nondegenerate fundamental class $C_i$ with $i \ge 1$, assume a 3-shelf bookcase to store all reactions in $C_i$. Let $y \to y'$ be a reaction in a nondegenerate fundamental class $C_i$, and 
${\rho _{{y_i} \to {y_i}'}} = \dfrac{{{h_{{y_i} \to {y_i}'}}}}{{{g_{{y_i} \to {y_i}'}}}}$ where ${g_{{y_i} \to {y_i}'}} \ne 0$ and $i = 1,...,w$. Define the shelving of the reaction $y \to y'$ using the following statements:
\begin{itemize}
\item[i.] $y \to y'$ is on the upper shelf if ${e^{{T_{.y}} \cdot \mu }} > \rho_{y_i\to y_{i}'}$.
\item[ii.] $y \to y'$ is on the lower shelf if ${e^{{T_{.y}} \cdot \mu }} < \rho_{y_i\to y_{i}'}$.
\item[iii.] $y \to y'$ is on the middle shelf if ${e^{{T_{.y}} \cdot \mu }} = \rho_{y_i\to y_{i}'}$.
\end{itemize}
We let ${M_{{y_i} \to {y_i}'}} = \ln {\rho _{{y_i} \to {y_i}'}}$ if ${\rho _{{y_i} \to {y_i}'}}>0$ and otherwise, take ${M_{{y_i} \to {y_i}'}}$ to be an arbitrarily large and negative number. For this step, we refer to the following lemma.
\begin{lemma} Let $\left(\mathscr{S},\mathscr{C},\mathscr{R},K\right)$ be a PL-RDK system and $\mathscr{O}$ be an orientation. Suppose there exist $\mu  \in {\mathbb{R}^\mathscr{S}}$, $g,h \in {\text{Ker }}L_\mathscr{O}$, ${P_i} \ (i=1,2,...,w)$ with representative ${{y_i} \to {y_i}'}$
and $\left\{ {{\rho _{{y_i} \to {y_i}'}} = \dfrac{{{h_{{y_i} \to {y_i}'}}}}{{{g_{{y_i} \to {y_i}'}}}}|{g_{{y_i} \to {y_i}'}} \ne 0,i = 1,...,w} \right\}$ satisfy the conditions given in Lemma \ref{rev_irrev4}. Then the following holds for a nondegenerate $C_i$:
  \begin{itemize}
\item[i.] All irreversible reactions in $C_i \ (i\ge 1)$ must belong to the middle shelf.
\item[ii.] $y \to y' \in {C_i}$ must belong to the upper shelf if ${{\rho _{{y_i} \to {y_i}'}}}\le 0$.
\item[iii.] If a reaction is reversible, then the reaction and its reversible pair must belong to the same shelf.
\item[iv.] Any two reactions in $C_i$ with the same reactant complex must belong to the same shelf.
\item[v.] Each reaction whose reactant complex lies in a nonterminal strong linkage class must belong to the middle shelf.
\item[vi.] Each reaction whose reactant complex lies in a terminal strong linkage class of the fundamental subnetwork must belong to the same shelf.
\item[vii.] If for a nondegenerate $C_i \ (i\ge 1)$, $\mathscr{N}_i$ forms a big (undirected) cycle (with at least three vertices), then its reactions are all in a terminal strong linkage class and belong to the middle shelf, where $\mathscr{N}_i$ is the subnetwork generated by reactions in $C_i$.
  \end{itemize}
\label{rev_irrev2s}
\end{lemma}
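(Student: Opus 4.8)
My plan rests on two structural observations that I would set up first. In a PL-RDK system the $T$-matrix column $T_{.y}$ depends only on the reactant complex $y$, and the conditions carried over from Lemma~\ref{rev_irrev4} are precisely the ones ensuring that $(\mu,g,h)$ comes from a pair of positive equilibria $c^1,c^2$ with $\mu=\ln c^2-\ln c^1$ and $g,h$ the corresponding net reaction-rate vectors on $\mathscr{O}$. Since under PL-RDK $K_{y\to y'}(x)=k_{y\to y'}\,x^{T_{.y}}$, the rate of $y\to y'$ at $c^2$ is exactly $e^{T_{.y}\cdot\mu}$ times its rate at $c^1$; in particular $h_{y\to y'}=e^{T_{.y}\cdot\mu}\,g_{y\to y'}$ with both sides positive for every oriented irreversible reaction. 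Second, because $g,h\in\mathrm{Ker}\,L_\mathscr{O}$ we may write $g=\sum_l a_l v^l$, $h=\sum_l b_l v^l$; and membership of $y\to y'$ in $P_i$ means $v^l_{y\to y'}=\beta_{y\to y'}\,v^l_{y_i\to y_i'}$ for all $l$ with one common \emph{nonzero} scalar $\beta_{y\to y'}$. Hence $g_{y\to y'}=\beta_{y\to y'}g_{y_i\to y_i'}$ and $h_{y\to y'}=\beta_{y\to y'}h_{y_i\to y_i'}$, so $h_{y\to y'}/g_{y\to y'}=\rho_{y_i\to y_i'}$ whenever $g_{y\to y'}\neq 0$; thus $\rho_{y_i\to y_i'}$ is a single threshold attached to the whole nondegenerate class $C_i$.

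Items (i), (ii) and (iv) then fall out directly. For (ii): if $\rho_{y_i\to y_i'}\le 0$ then $e^{T_{.y}\cdot\mu}>0\ge\rho_{y_i\to y_i'}$, so the reaction is on the upper shelf. For (iv): two reactions with the same reactant complex share $T_{.y}$ by PL-RDK, hence the same value $e^{T_{.y}\cdot\mu}$, hence the same shelf. For (i): an irreversible reaction of $C_i$ is oriented into $\mathscr{O}$, lies in $P_i$ and has $g_{y\to y'}\neq 0$, so $e^{T_{.y}\cdot\mu}=h_{y\to y'}/g_{y\to y'}=\rho_{y_i\to y_i'}$ and it is on the middle shelf.

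For (iii) I would carry out the short explicit computation. Let $y\to y'\in\mathscr{O}$ with reverse $y'\to y$, both in $C_i$; put $a=K_{y\to y'}(c^1)$, $b=K_{y'\to y}(c^1)$ (both positive), $s=e^{T_{.y}\cdot\mu}$, $t=e^{T_{.y'}\cdot\mu}$, so $g_{y\to y'}=a-b$, $h_{y\to y'}=sa-tb$, $\rho_{y_i\to y_i'}=(sa-tb)/(a-b)$. Then
\[
s-\rho_{y_i\to y_i'}=\frac{b\,(t-s)}{a-b},\qquad t-\rho_{y_i\to y_i'}=\frac{a\,(t-s)}{a-b},
\]
and since $a,b>0$ these two differences have the same sign; hence $y\to y'$ and $y'\to y$ lie on the same shelf, and the shelf of a reversible reaction does not depend on which of its directions is oriented.

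The substantive part is (v), (vi), (vii), which concern how $e^{T_{.y}\cdot\mu}$ varies as the reactant $y$ ranges over a strong linkage class $\Lambda$ of the fundamental subnetwork $\mathscr{N}_i$, and this is where I expect the main difficulty. The strategy is to argue by contradiction: if two reactions of $\mathscr{N}_i$ with reactants in $\Lambda$ were on different shelves, use strong connectivity of $\Lambda$ to string them together by a directed path (and, in the terminal case, close it into a cycle), and then combine the proportionality $g_{y\to y'}=\beta_{y\to y'}g_{y_i\to y_i'}$ with $\beta_{y\to y'}\neq 0$, the equilibrium relation $L_\mathscr{O}(g)=0$, the sign-compatibility of $g$, and items (iii)--(iv), to force a contradiction, yielding (vi). For (v), a nonterminal $\Lambda$ additionally admits an exit reaction of $\mathscr{N}_i$ carrying nonzero flux, and this extra constraint sharpens ``same shelf'' to ``middle shelf.'' Finally (vii) is the cyclic case: a big undirected cycle on $\ge 3$ complexes, being strongly connected with no exit, is its own terminal strong linkage class, and the flux forced around it by $L_\mathscr{O}(g)=0$ together with nondegeneracy of $C_i$ pins every one of its reactions to the middle shelf, so it follows from the arguments for (v) and (vi). I expect the only genuinely delicate bookkeeping to be in making the nonterminal-versus-terminal flux dichotomy on $\mathscr{N}_i$ precise and reconciling it with the $\beta$-proportionality; everything else reduces to the elementary identities above, essentially as in \cite{ji} for the mass-action case.
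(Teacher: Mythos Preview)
The paper does not prove this lemma. It is stated in Section~4.1 as part of the review of the Multistationarity Algorithm for PLK systems, with the surrounding text referring the reader to \cite{hmr2019} and \cite{ji} for details; no proof or even a sketch is given in the paper itself. So there is no in-paper argument to compare your proposal against.

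On the merits of your sketch: the reductions for (i), (ii), (iv) and the explicit computation for (iii) are correct and are essentially the standard arguments one finds in \cite{ji} adapted to the PL-RDK setting via the $T$-matrix. Your framing --- that $\rho_{y_i\to y_i'}$ is a single threshold for the whole class because $g$ and $h$ are proportional on $P_i$ via the common scalar $\beta_{y\to y'}$ --- is the right structural observation. For (v)--(vii) you give only a strategy; the actual argument in \cite{ji} (and its PL-RDK extension in \cite{hmr2019}) does proceed along the lines you indicate, using strong connectivity of $\Lambda$ together with the sign and flux constraints imposed by $g\in\mathrm{Ker}\,L_{\mathscr{O}}$, but the bookkeeping you flag as ``delicate'' is genuinely where the work lies, and your outline does not yet pin down the contradiction in the nonterminal case (v) or the middle-shelf conclusion in the cyclic case (vii). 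If you intend a self-contained proof you will need to fill those in; otherwise, citing \cite{ji,hmr2019} as the paper does is appropriate.
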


\noindent For STEPS 10, 11, and 12, we refer to the conclusion of the following lemma.
\begin{lemma} \cite{{hmr2019}} Let $\left(\mathscr{S},\mathscr{C},\mathscr{R},K\right)$ be a PL-RDK system and $\mathscr{O}$ be an orientation. Let $\kappa  \in \mathbb{R}_{ > 0}^\mathscr{R}$, and $\mu  \in {\mathbb{R}^\mathscr{S}}$. Let $g,h \in \mathbb {R}^\mathscr{O}$ such that\\
${g_{y \to y'}} = \left\{ \begin{array}{ll}
{\kappa _{y \to y'}} - {\kappa _{y' \to y}}&{\rm{   if }} \ y \to y' \in \mathscr{O}{\rm{\  is \ reversible}}\\
{\kappa _{y \to y'}}&{\rm{          if }} \ y \to y' \in \mathscr{O}{\rm{ \  is \  irreversible}}
\end{array} \right.$
and\\
${h_{y \to y'}} = \left\{ \begin{array}{ll}
{\kappa _{y \to y'}}{e^{{T_{.y}} \cdot \mu }} - {\kappa _{y' \to y}}{e^{{T_{.y'}} \cdot \mu }}&{\rm{  if }} \ y \to y' \in \mathscr{O}{\rm{\ is \ reversible}}\\
{\kappa _{y \to y'}}{e^{{T_{.y}} \cdot \mu }}&{\rm{           if }} \ y \to y' \in \mathscr{O}{\rm{\ is \ irreversible}}
\end{array} \right..$\\
For $i=1,2,...,w$, let ${P_i}$ be the equivalence class with ${y_i} \to {y_i}'$ as representative. Moreover, let ${{\rho _{{y_i} \to {y_i}'}} = \dfrac{{{h_{{y_i} \to {y_i}'}}}}{{{g_{{y_i} \to {y_i}'}}}}}$ for nondegenerate fundamental classes ${C_i}$.
\begin{itemize}
\item[i.] If $y \to y' \in {P_i}\ (i=1,2,...,w)$ is irreversible then ${g_{y \to y'}} > 0$, ${h_{y \to y'}} > 0$, and ${M_{{y_i} \to {y_i}'}} = {{{T_{.y}} \cdot \mu }}$.
\item[ii.] Suppose $y \to y' \in {P_i} \ (i=1,2,...,w)$ is reversible and ${C_i}$ is nondegenerate.
\begin{itemize}
\item[a.] If ${g_{y_i \to y_i'}} > 0$ and $y \to y'$ is on the upper shelf, then ${M_{{y_i} \to {y_i}'}} < {{{T_{.y}} \cdot \mu }} < {{{T_{.y'}} \cdot \mu }}$.
\item[b.] If ${g_{y_i \to y_i'}} > 0$ and $y \to y'$ is on the middle shelf, then ${M_{{y_i} \to {y_i}'}} = {{{T_{.y}} \cdot \mu }} = {{{T_{.y'}} \cdot \mu }}$.
\item[c.] If ${g_{y_i \to y_i'}} > 0$ and $y \to y'$ is on the lower shelf, then ${M_{{y_i} \to {y_i}'}} > {{{T_{.y}} \cdot \mu }} > {{{T_{.y'}} \cdot \mu }}$.
\item[d.] If ${g_{y_i \to y_i'}} < 0$ and $y \to y'$ is on the upper shelf, then ${M_{{y_i} \to {y_i}'}} < {{{T_{.y'}} \cdot \mu }} < {{{T_{.y}} \cdot \mu }}$.
\item[e.] If ${g_{y_i \to y_i'}} < 0$ and $y \to y'$ is on the middle shelf, then ${M_{{y_i} \to {y_i}'}} = {{{T_{.y'}} \cdot \mu }} = {{{T_{.y}} \cdot \mu }}$.
\item[f.] If ${g_{y_i \to y_i'}} < 0$, and $y \to y'$ is on the lower shelf, then ${M_{{y_i} \to {y_i}'}} > {{{T_{.y'}} \cdot \mu }} > {{{T_{.y}} \cdot \mu }}$.
\end{itemize}
\item[iii.] Suppose $y \to y' \in {P_i}\ (i=1,2,...,w)$ is reversible and ${C_i}$ is degenerate.
\begin{itemize}
\item[a.] If ${h_{y_i \to y_i'}} > 0$ then ${{{T_{.y}} \cdot \mu }} > {{{T_{.y'}} \cdot \mu }}$.
\item[b.] If ${h_{y_i \to y_i'}} = 0$ then ${{{T_{.y}} \cdot \mu }} = {{{T_{.y'}} \cdot \mu }}$.
\item[c.] If ${h_{y_i \to y_i'}} < 0$ then ${{{T_{.y}} \cdot \mu }} < {{{T_{.y'}} \cdot \mu }}$.
\end{itemize}
\item[iv.] If $y \to y' \in {P_0}$ is reversible then ${{{T_{.y}} \cdot \mu }} = {{{T_{.y'}} \cdot \mu }}$.
\end{itemize}
\label{rev_irrev4}
\end{lemma}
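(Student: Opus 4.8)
The plan is to reduce the whole statement to a single algebraic identity attached to each oriented reaction, together with the structural fact that the ratio $\rho$ is constant on each equivalence class. First I would use the standing input from the MSA set-up that $g,h \in \text{Ker } L_\mathscr{O}$ (this is how $g$ and $h$ enter STEPS~8--12; cf. Lemma~\ref{rev_irrev2s}), so that with $\{v^l\}_{l=1}^d$ the chosen basis of $\text{Ker } L_\mathscr{O}$ we may write $g=\sum_l c_l v^l$ and $h=\sum_l d_l v^l$. By the definition of the equivalence classes, for $y\to y'\in P_i$ there is a scalar $\alpha_{y\to y'}$ --- positive after the realignment of STEP~5 --- with $v^l_{y_i\to y_i'}=\alpha_{y\to y'}v^l_{y\to y'}$ for every $l$; hence $g_{y_i\to y_i'}=\alpha_{y\to y'}g_{y\to y'}$ and $h_{y_i\to y_i'}=\alpha_{y\to y'}h_{y\to y'}$. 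Two observations follow at once: (1) $g_{y\to y'}$ has the same sign as $g_{y_i\to y_i'}$, and likewise for $h$; and (2) on a nondegenerate class $C_i$ one has $h_{y\to y'}=\rho_{y_i\to y_i'}\,g_{y\to y'}$ for every reaction $y\to y'\in P_i$.

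I would next establish the master identity for a reversible $y\to y'\in P_i$ with $C_i$ nondegenerate. Substituting $g_{y\to y'}=\kappa_{y\to y'}-\kappa_{y'\to y}$ and $h_{y\to y'}=\kappa_{y\to y'}e^{T_{.y}\cdot\mu}-\kappa_{y'\to y}e^{T_{.y'}\cdot\mu}$ into the relation $h_{y\to y'}=\rho\,g_{y\to y'}$ (with $\rho:=\rho_{y_i\to y_i'}$) and rearranging yields
\[
\kappa_{y\to y'}(e^{T_{.y}\cdot\mu}-\rho)=\kappa_{y'\to y}(e^{T_{.y'}\cdot\mu}-\rho).
\]
Since $\kappa_{y\to y'},\kappa_{y'\to y}>0$, the factors $e^{T_{.y}\cdot\mu}-\rho$ and $e^{T_{.y'}\cdot\mu}-\rho$ share a sign, which the definition of the shelves (STEP~9) pins down through the position of $e^{T_{.y}\cdot\mu}$ relative to $\rho$; in particular, on the middle and lower shelves $e^{T_{.y}\cdot\mu}>0$ forces $\rho>0$, while on the upper shelf $\rho\le 0$ remains possible. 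On the middle shelf the identity instantly gives $e^{T_{.y'}\cdot\mu}=\rho$, hence $T_{.y}\cdot\mu=T_{.y'}\cdot\mu=\ln\rho=M_{y_i\to y_i'}$. Off the middle shelf I would divide to get $(e^{T_{.y}\cdot\mu}-\rho)/(e^{T_{.y'}\cdot\mu}-\rho)=\kappa_{y'\to y}/\kappa_{y\to y'}$; by observation~(1) this ratio is $<1$ when $g_{y_i\to y_i'}>0$ and $>1$ when $g_{y_i\to y_i'}<0$, and comparing magnitudes --- flipping the inequality when both factors are negative (lower shelf) --- fixes the order of $T_{.y}\cdot\mu$ and $T_{.y'}\cdot\mu$. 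The comparison with $M_{y_i\to y_i'}$ is then immediate: if $\rho>0$ then $M_{y_i\to y_i'}=\ln\rho$ and monotonicity of $\ln$ applied to the shelf inequality gives the strict order, while if $\rho\le 0$ (only possible on the upper shelf) $M_{y_i\to y_i'}$ is by convention an arbitrarily large negative number, so the required $M_{y_i\to y_i'}<T_{.y}\cdot\mu$ holds trivially. Running this bookkeeping through the six sign/shelf combinations produces cases~(ii)(a)--(f).

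The remaining parts are lighter. For~(i), an irreversible $y\to y'\in P_i$ has $g_{y\to y'}=\kappa_{y\to y'}>0$ and $h_{y\to y'}=\kappa_{y\to y'}e^{T_{.y}\cdot\mu}>0$ immediately; observation~(1) then forces $g_{y_i\to y_i'}\ne 0$, so $C_i$ is nondegenerate, and observation~(2) gives $\rho_{y_i\to y_i'}=h_{y\to y'}/g_{y\to y'}=e^{T_{.y}\cdot\mu}>0$, whence $M_{y_i\to y_i'}=\ln\rho_{y_i\to y_i'}=T_{.y}\cdot\mu$. For~(iii), $C_i$ degenerate means $g_{y_i\to y_i'}=0$, so $g_{y\to y'}=0$, i.e. $\kappa_{y\to y'}=\kappa_{y'\to y}$; then $h_{y\to y'}=\kappa_{y\to y'}(e^{T_{.y}\cdot\mu}-e^{T_{.y'}\cdot\mu})$, and since (observation~(1)) $h_{y\to y'}$ has the sign of $h_{y_i\to y_i'}$ while $\kappa_{y\to y'}>0$, the sign of $e^{T_{.y}\cdot\mu}-e^{T_{.y'}\cdot\mu}$ --- equivalently the order of $T_{.y}\cdot\mu$ and $T_{.y'}\cdot\mu$ --- matches that of $h_{y_i\to y_i'}$, giving~(iii)(a)--(c). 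For~(iv), every reaction in $P_0$ has all $v^l$-coordinates zero, so $g_{y\to y'}=h_{y\to y'}=0$; the first yields $\kappa_{y\to y'}=\kappa_{y'\to y}$ and then the second yields $e^{T_{.y}\cdot\mu}=e^{T_{.y'}\cdot\mu}$, i.e. $T_{.y}\cdot\mu=T_{.y'}\cdot\mu$. The main obstacle is purely the bookkeeping in part~(ii): keeping every inequality pointing the right way across the six combinations of shelf position with the sign of $g_{y_i\to y_i'}$, and invoking the ``arbitrarily large negative'' convention for $M_{y_i\to y_i'}$ precisely in the cases where $\rho\le 0$ can occur.
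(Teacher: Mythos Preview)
The paper does not prove this lemma: it is quoted verbatim from \cite{hmr2019} and used as a black box for STEPS~10--12 of the MSA, with no argument supplied. There is therefore no ``paper's own proof'' to compare against.

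That said, your reconstruction is correct and is essentially the argument one finds in the original reference. You rightly identify the crucial standing hypothesis that $g,h\in\text{Ker }L_{\mathscr{O}}$, which is implicit in the MSA context (it encodes the two equilibrium conditions $\sum_{\mathscr{R}}\kappa_{y\to y'}(y'-y)=0$ and $\sum_{\mathscr{R}}\kappa_{y\to y'}e^{T_{.y}\cdot\mu}(y'-y)=0$) but is not stated in the lemma as printed here; without it, parts~(iii) and~(iv) would not follow. Your use of the equivalence-class proportionality (via the realignment of STEP~5) to transport the sign of $g_{y_i\to y_i'}$ and the ratio $\rho$ to every $y\to y'\in P_i$, the master identity $\kappa_{y\to y'}(e^{T_{.y}\cdot\mu}-\rho)=\kappa_{y'\to y}(e^{T_{.y'}\cdot\mu}-\rho)$, and the six-case shelf/sign bookkeeping (including the convention for $M_{y_i\to y_i'}$ when $\rho\le 0$) are exactly the ingredients of the standard proof. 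No gap.
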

\noindent To provide an easier notation, we use $M_i$ instead of ${M_{{y_i} \to {y_i}'}}$.

\noindent {\bf STEP 10}: Put shelving equalities and inequalities from the nondegenerate $C_i$'s. 

\noindent {\bf STEP 11}: Obtain additional upper and lower shelving inequalities from $P_i$'s with nondegenerate $C_i$'s.

\noindent {\bf STEP 12}: Obtain additional equalities and inequalities from $P_i$'s with degenerate $C_i$'s. 

\noindent {\bf STEP 13}: Add $M$ equalities and inequalities. We refer to the following statement.\\
Two multisets $Q_1$ and $Q_2$ are nonsegregated if at least one of the following two cases holds:
\begin{itemize}
\item[i.] There exist $a$ from $Q_1$ or $Q_2$, and $b<c$ from the other such that $b<a<c$.
\item[ii.] All elements in the two multisets are equal, or there exist $a,b\in Q_1$ and $c,d\in Q_2$ such that $c=a<b=d$.
\end{itemize}
For the full details, we refer the reader to \cite{ji}.\\
\noindent {\bf STEP 14}: Check for solution(s) of the system of inequalities.
If the system is linear, then the system obtained from STEP 10 to STEP 13 is complete. The solution (vector $\mu$), which must be sign-compatible with an element of the stoichiometric subspace, will be called a {\bf{signature}}, if it exists, and the kinetic system has the capacity to admit multiple equilibria. If a solution does not exist, STEPS 8-14 may be repeated. If for all possible systems of inequalities, solutions do not exist, then the kinetic system does not have the capacity to admit multiple equilibria within the stoichiometric class.

\subsection{Implementation of the extension for STAR-MSC transform of a PYK kinetic system}
A positive equilibrium of a PYK system $\left ( \mathscr{N},K \right )$ is also a positive equilibrium of its transform $\left ( \mathscr{N}^*,K^* \right )$ for the set of rate constants given by the original PYK system. For $j=1,2,...,h$, we form $\mathscr{R}^*_j=\{r_{ij}|i=1,2,...,r\}$ which induces a partition of $\mathscr{R}^*$ generating the decomposition $\mathscr{N}^*=\mathscr{N}_1 \cup \mathscr{N}_2 \cup ... \cup \mathscr{N}_h$. Note that $h-1$ different replicas of $\mathscr{N} = \mathscr{N}_1$ are obtained denoted by $\mathscr{N}_2$, $\mathscr{N}_3$, ... , $\mathscr{N}_{h}$. Together with the restriction of the rate function $K^*$, they form PLK subsystems such that for reaction $r_{ij}$ in $N_j$ for each $j$, we have $K^*_{ij}(x)=k_ia_{ij}M_{ij}$. After applying the MSA to the transform $\left ( \mathscr{N}^*,K^* \right )$, we have the following statements.
\begin{itemize}
\item[i.] If there are no rate constants for which $\left ( \mathscr{N}^*,K^* \right )$ has the capacity for multistationarity (i.e., it is monostationary for all stoichiometric classes), then the original system $\left ( \mathscr{N},K \right )$ is also monostationary for all stoichiometric classes.
\item[ii.] Otherwise, if after solving for the possible multiple equilibria in the MSA for the PLK system and assuming these equilibria for the original PYK sytem, there exist corresponding rate constants, then the original system admits multiple equilibria in a stoichiometric class.
\end{itemize}

\subsection{Computations from application to STAR-MSC transform of the running example}
The STAR-MSC transform in the running example has the following reaction network together with the corresponding kinetics.
\[ \begin{array}{llll}
{\text {Reactions}} & {\text {Kinetics}}  & {\text {Reactions}}   & {\text {Kinetics}}\\
R_1: 5X+Y \to X+3Y & \alpha_1k_1X^2Y &  R_2: X+3Y  \to 5X+Y & \beta_1k_2X\\
R_3: 11X+7Y \to 7X+9Y  &  \alpha_2k_1XY^2 & R_4: 7X+9Y \to 11X+7Y & \beta_2k_2Y\\
R_5: 17X+13Y \to 13X+15Y &  \alpha_3k_1X^2Y^2 & R_{6}: 13X+15Y \to 17X+13Y & \beta_3k_2XY\\
R_7: 23X+19Y \to 19X+21Y  &  \alpha_4k_1X^3Y & R_{8}: 19X+21Y \to 23X+19Y  & \beta_4k_2X^2\\
\end{array}\]
Hence, the transpose of the kinetic order matrix is
\[\bordermatrix{%
& R_1 & R_2 & R_3 & R_4 & R_5 & R_6 & R_7 & R_8 \cr
X &2&1&1&0&2&1&3&2  \cr
Y &1&0&2&1&2&1&1&0  \cr
}.\]

We choose the following orientation: ${\mathscr{O}}=\{R_1,R_3,R_6,R_8\}.$
Below is basis for ${\text{Ker }}{L_\mathscr{O}}$ obtained by solving $\sum\limits_{y \to y' \in \mathscr{O}} {{\alpha _{y \to y'}}\left( {y' - y} \right)}=0$.
\[\bordermatrix{
& v^1 & v^2 & v^3\cr
R_1 & -1 & 1 & 1  \cr
R_3 & 1 & 0 & 0  \cr
R_6 & 0 & 1 & 0  \cr
R_8 & 0 & 0 & 1  \cr
}\]
By inspecting the rows of the matrix, we obtain four equivalence classes: $\{R_1\}$, $\{R_3\}$, $\{R_6\}$, and $\{R_8\}$. Since reversible pairs belong to the same fundamental class, we obtain the following fundamental classes: $\{R_1,R_2\}$, $\{R_3,R_4\}$, $\{R_5,R_6\}$, and $\{R_7,R_8\}$.
In addition, a basis for ${\text{Ker}}^{\perp}{\text{ }}L_{\mathscr{O}}\cap \Gamma_W$ is 
$\{{b_1} = \left( {\begin{array}{*{20}{c}}
{ - 1}&{ - 1}&1&1
\end{array}} \right)\}$.
We pick the sign patterns to be positive. In addition, we choose the case when each of the reactions belongs to the middle shelf.\\
${{\cal M}_1} = \left\{ { R_1: 5X+Y \to X+3Y,  R_2: X+3Y  \to 5X+Y   } \right\}$\\
${{\cal M}_2} = \left\{ { R_3: 11X+7Y \to 7X+9Y,  R_4: 7X+9Y \to 11X+7Y  } \right\}$\\
${{\cal M}_3} = \left\{ {R_5: 17X+13Y \to 13X+15Y, R_{6}: 13X+15Y \to 17X+13Y
} \right\}$\\
${{\cal M}_4} = \left\{ {  R_7: 23X+19Y \to 19X+21Y, R_{8}: 19X+21Y \to 23X+19Y } \right\}$

We choose $M_1 > M_3 > M_2$ from $b^1$. Thus, we obtain the following system.
\[ \begin{array}{c}
2{\mu _{{X}}} +{\mu _{{Y}}} = {\mu _{{X}}}={M_1}\\
{\mu _{{X}}} +2{\mu _{{Y}}} = {\mu _{{Y}}}={M_2}\\
2{\mu _{{X}}} +2{\mu _{{Y}}} = {\mu _{{X}}} +{\mu _{{Y}}}={M_3}\\
3{\mu _{{X}}} +{\mu _{{Y}}} = 2{\mu _{{X}}} ={M_4}\\
M_1 > M_3 > M_2
\end{array}\]
The given system has the solution $(\mu_X,\mu_Y)=(1,-1)$. A basis for the stoichiometric subspace is $\{-4X+2Y\}$. Hence, we can choose $\sigma=(2,-1)$ which is of course sign-compatible with $\mu=(1,-1)$. We can compute these possible equilibria: $c^{*}=(3.163953414,0.581976707
)$ and $c^{**}=(1.163953414,1.581976707)$.

If the kinetic system has indeed two positive and distinct equilibria, $c^{*}$ and $c^{**}$, then 
$$
\sum\limits_{y \to y' \in {\mathscr{R}}} {{\kappa _{y \to y'}}\left( {y' - y} \right) = 0} 
\label{equi3}
{\rm{ \ and \ }}
\sum\limits_{y \to y' \in {\mathscr{R}}} {{\kappa _{y \to y'}}{e^{{T_{.y }\cdot \mu}}}\left( {y' - y} \right) = 0} 
\label{equi4}
$$
where $T_{.y}$ is the column of the $T$-matrix associated with the reactant complex $y$. Thus, we have to find a vector $\kappa \in Ker{L_\mathscr{R}}$ such that the second summation is also satisfied. The following is a basis for $Ker{L_\mathscr{R}}$.
\[\left( {\begin{array}{*{20}{c}}
1&{ - 1}&1&{ - 1}&1&{ - 1}&1\\
1&0&0&0&0&0&0\\
0&1&0&0&0&0&0\\
0&0&1&0&0&0&0\\
0&0&0&1&0&0&0\\
0&0&0&0&1&0&0\\
0&0&0&0&0&1&0\\
0&0&0&0&0&0&1
\end{array}} \right)\]

\begin{table}
\caption{Summary of values for the Running Example}
\label{tab:values:heck}         
\centering
\begin{tabular}{cccccc}
\noalign{\smallskip}\hline\noalign{\smallskip}
$y\to y'$&  $\kappa_{y \to y'}$ & $\kappa_{y \to y'}e^{T_{.y}\cdot \mu}$  & $\alpha_ik_1$ or $\beta_ik_2$\\
\noalign{\smallskip}\hline\noalign{\smallskip}
 $R_1$   & $1$ &  2.718281828  & 0.466582793\\
 $R_2$ & $1$   &       2.718281828 & 0.859140914\\
$R_3$ & $1$   &       0.367879441 & 0.343292434\\
$R_4$  & $1$  &       0.367879441 & 0.632120559\\
 $R_5$  & $1$  &       1 & 0.294936576\\
 $R_6$  & $1$  & 1 & 0.543080635\\
 $R_7$  & $1$  &      7.389056099 & 0.400860367\\
$R_8$ & $1$   &      7.389056099 &  0.738123111\\
\noalign{\smallskip}\hline
\end{tabular}
\end{table}

Therefore, the original network with reactions $r_1:5X+Y\rightarrow X+3Y$ and $r_2:X+3Y\rightarrow 5X+Y$ with the following kinetics:
\begin{align*}
{K_{{r_1}}}(X,Y) &= {0.466583{X^2}Y + 0.343292X{Y^2} + 0.294937{X^2}{Y^2} + 0.400860{X^3}Y}\\
{K_{{r_2}}}(X,Y) &= {0.859141X + 0.632121Y + 0.543081XY + 0.738123{X^2}}
\end{align*}
admits multiple equilibria $c^{*} = (3.1640,0.5820)$ and $c^{**} = (1.1640,1.5820)$.

\section{Conclusion and Outlook}
\label{sec:conclusion}
The paper established a general method to determine whether a chemical reaction network endowed with poly-PL kinetics (PYK), i.e., a kinetics which is non-negative linear combination of power law kinetics (PLK), has the capacity to admit at least two positive equilibria within a stoichiometric class. By first using a transformation called STAR-MSC that converts a PYK system into a dynamically equivalent PLK system, we analyze the multistationarity capacity of a CRN with PYK system by employing the Multistationarity Algorithm of Hernandez et al. \cite{hmr2019}.  An illustrative example is provided to further clarify the steps of the transformation and algorithm. 

After applying MSA and the fact that a positive equilibrium of a PYK system $(\mathscr N,K)$ is also a positive equilibrium of its transform $(\mathscr N^\ast,K^\ast)$ for the set of rate constants given by the PYK system, we obtain the following:

If there are no rate constants for which $\left ( \mathscr{N}^*,K^* \right )$ has the capacity for multistationarity (i.e., it is monostationary for all stoichiometric classes), then the original system $\left ( \mathscr{N},K \right )$ is also monostationary for all stoichiometric classes.
Otherwise, if after solving for the possible multiple equilibria in the MSA for the PLK system and assuming these equilibria for the original PYK sytem, there exist corresponding rate constants, then the original system admits multiple equilibria in a stoichiometric class.

On-going research works such as those of Fortun et al. \cite{FTJM2020} and Hernandez and Mendoza \cite{HEME2020} also take a similar direction of this paper.  In  \cite{FTJM2020}, the authors used the canonical PL-representation of a PYK system and the STAR-MSC to come up with extended results on generalized mass action systems.  These also include those of M\"uller and Regensburger \cite{muller,MURE2014} on complex equilibria multiplicity and of Boros et al. \cite{BOMR2019} on linear stability to subsets of PY-RDK (i.e., subset of complex factorizable PYK) systems. On the other hand, \cite{HEME2020} studied Hill-type kinetic (HTK) systems associating them with corresponding unique PYK system. This association lead to the identification of subsets of the HTK systems for which recent results on multiplicity and concentration robustness can be applied. The results allow computational approaches for further applications of PYK systems.

\section*{Acknowledgement}
DMM and BSH acknowledge the support of DOST -- SEI (Department of Science and Technology -- Science Education Institute), Philippines for the ASTHRDP Scholarship grant.

\end{document}